\newtheorem{theorem}{Theorem}
\newtheorem{proposition}{Proposition}
\date{\today}
\newcommand{\hyper}[5]{\,{}_{#1}F_{#2}\left(
\begin{array}{r|l}
\begin{array}{cc}{\displaystyle{#3}}\\
{\displaystyle{#4}}
\end{array} & {\displaystyle{#5}}
\end{array} \right)}
\newtheorem{remark}{Remark}
\newcommand{\phin}[6]{\,{}S_{#1}\left(\!\!%
\begin{array}{c|c}
\begin{array}{cc}{\displaystyle{#2}} & {\displaystyle{#3}}\\[-0.1ex]
{\displaystyle{#4}} & {\displaystyle{#5}} \end{array} &
\,{\displaystyle{#6}}
\end{array} \right)}
\newcommand{\ww}[6]{\,{}W_{#1}\left(\!\!%
\begin{array}{c|c}
\begin{array}{cc}{\displaystyle{#2}} & {\displaystyle{#3}}\\[-0.1ex]
{\displaystyle{#4}} & {\displaystyle{#5}} \end{array} &
\,{\displaystyle{#6}}
\end{array} \right)}
\newcommand{\sss}[6]{\,{}S_{#1}\left(\!\!%
\begin{array}{c|c}
\begin{array}{cc}{\displaystyle{#2}} & {\displaystyle{#3}}\\[-0.1ex]
{\displaystyle{#4}} & {\displaystyle{#5}} \end{array} &
\,{\displaystyle{#6}}
\end{array} \right)}
\newcommand{\ssstar}[6]{\,{}S_{#1}^{*}\left(\!\!%
\begin{array}{c|c}
\begin{array}{cc}{\displaystyle{#2}} & {\displaystyle{#3}}\\[-0.1ex]
{\displaystyle{#4}} & {\displaystyle{#5}} \end{array} &
\,{\displaystyle{#6}}
\end{array} \right)}
\begin{document}

\title[A basic class of discrete orthogonal polynomials]{A basic class of symmetric orthogonal polynomials of a discrete variable}

\author[Masjed-Jamei]{\sc Mohammad Masjed-Jamei}
\address[Masjed-Jamei]{Department of Mathematics, K.N.Toosi University of Technology, P.O. Box 16315--1618, Tehran, Iran.}
\email[Masjed-Jamei]{mmjamei@kntu.ac.ir, mmjamei@yahoo.com}

\author[Area]{\sc Iv\'an Area}
\address[Area]{Departamento de Matem\'atica Aplicada II,
              E.E. de Telecomunicaci\'on,
              Universidade de Vigo,
              Campus Lagoas-Marcosende,
              36310 Vigo, Spain.}
\email[Area]{area@uvigo.es}

\thanks{\emph{Acknowledgments.} The work of M. Masjed-Jamei has been supported by a grant from``Iran National Science Foundation" No. 91002576 and the work of I. Area has been partially supported by the Ministerio de Ciencia e Innovaci\'on of Spain under grants MTM2009--14668--C02--01 and MTM2012--38794--C02--01, co-financed by the European Community fund FEDER. The referee and handling editor deserve special thanks for careful reading and many useful comments and suggestions which have improved the manuscript. Dedicated to Prof. A. Ronveaux on the occasion of his 80th Birthday}

\begin{abstract}
By using a generalization of Sturm-Liouville problems in discrete spaces, a basic
class of symmetric orthogonal polynomials of a discrete variable with four free parameters,
which generalizes all classical discrete symmetric orthogonal polynomials, is introduced. The
standard properties of these polynomials, such as a second order difference equation, an explicit form for the polynomials, a three term recurrence relation and an orthogonality relation are presented. It is shown that two hypergeometric orthogonal sequences with 20 different weight functions can be extracted from this class.  Moreover, moments corresponding to these weight functions can be explicitly computed. Finally, a particular example containing all classical discrete symmetric orthogonal polynomials is studied in detail.
\end{abstract}

\keywords{Extended Sturm-Liouville theorem for symmetric functions of a discrete variable, Classical symmetric orthogonal polynomials of a discrete variable, Hypergeometric series,  Symmetric Kravchuk and Hahn-Eberlein polynomials.}

\subjclass[2010]{Primary: 42C05, 33E30, 33C47  \ Secondary: 33C45, 33C20.}

\maketitle
\setcounter{section}{0}
\setcounter{equation}{0}

\section{Introduction}
Some special functions of mathematical physics such as classical orthogonal polynomials and cylindrical functions \cite{MR922041}, are solutions of a differential equation of hypergeometric type \cite{MR1149380,MR922041,MR998364}
\begin{equation}\label{eq:11suslov}
\sigma(x)y''(x)+\tau(x)y'(x)+\lambda y(x)=0,
\end{equation}
and extendible by changing equation (\ref{eq:11suslov}) to a difference equation of the form
\begin{equation}\label{eq:12suslov}
\tilde{\sigma}(x(s)) \frac{\Delta}{\nabla x_{1}(s)} \left[ \frac{\nabla y(s)}{\nabla x(s)} \right] + \frac{\tilde{\tau}(x(s))}{2} \left[ \frac{\Delta y(s)}{\Delta x(s)} + \frac{\nabla y(s)}{\nabla x(s)} \right] + \lambda y(s)=0,
\end{equation}
where
\[
\Delta x(s)=x(s+1)-x(s), \quad \nabla x(s)=\Delta x(s-1), \quad \frac{\Delta}{\Delta x(s)}f(s)=\frac{f(s+1)-f(s)}{x(s+1)-x(s)},
\]
$\tilde{\sigma}(x(s))$ and $\tilde{\tau}(x(s))$ are polynomials of degree at most two and one, respectively, in $x(s)$, $\lambda$ is a constant, and $x_{1}(s)=x(s+1/2)$.

The difference equation (\ref{eq:12suslov}), which is obtained by approximating the differential equation (\ref{eq:11suslov}) on a non-uniform lattice, is of much importance \cite{MR998364} as its particular solutions have been applied in quantum mechanics, theory of group representations and especially computational mathematics, where one can point to the Clebsch-Gordan and  Racah coefficients with wide applications in atomic and nuclear spectroscopy. There exist different approaches for the analysis of orthogonal polynomials of a discrete variable running from the classical references \cite{MR0481884,1941} to the recent monograph \cite{MR2656096}, which is a basic reference on orthogonal polynomials.

Also there exists a number of numerical and symbolic methods for solving hypergeometric equations of type (\ref{eq:11suslov}) or (\ref{eq:12suslov}), which are of interest in applications, particularly for cases containing symmetric solutions, such as resolution of the Gibbs phenomenon \cite{MR1491051,MR2726815}, Fourier-Kravchuk transform used in Optics \cite{MR1456607}, approximation of harmonic oscillator wave functions \cite{MR1698479}, tissue segmentation of human brain MRI through preprocessing \cite{Archibald2003489}, reconstructions for electromagnetic waves in the presence of a metal nanoparticle \cite{Min2006730}, efficient determination of the critical parameters and the statistical quantities for Klein-Gordon and sine-Gordon equations with a singular potential \cite{Chakraborty2012}, image representation \cite{Zhu20121540,Hosny2012476} and quantitative theory for the lateral momentum distribution after strong-field ionization \cite{Dreissigacker2012}.

The main aim of this paper is to introduce a basic class of symmetric orthogonal polynomials of a discrete variable with four free parameters, which is the polynomial solution of a symmetric generalization of equation (\ref{eq:12suslov}) on the uniform lattice $x(s)=s$. Computational aspects of these new polynomials are described in detail giving their explicit representation  as well as the three-term recurrence relation they satisfy. A full classification of weight functions and orthogonality supports is given together with computing the moments of the aforesaid weights. From this class all classical symmetric orthogonal polynomials of a discrete variable can be recovered (section \ref{section5}), and its limit relation with the continuous type of generalized classical symmetric orthogonal polynomials is given (see remark \ref{remark1}).

A regular Sturm-Liouville problem of continuous type is a boundary value problem in the form
\begin{equation}\label{eq:1}
\frac{d}{dx} \left( k(x) \frac{dy_{n}(x)}{dx} \right) + \left(\lambda_{n} \varrho(x)-q(x) \right) y_{n}(x)=0 \qquad (k(x)>0, \varrho(x)>0),
\end{equation}
which is defined on an open interval $(a,b)$, and has the boundary conditions
\begin{equation}\label{eq:2}
\alpha_{1} y(a) + \beta_{1} y'(a)=0, \quad \alpha_{2} y(b) + \beta_{2} y'(b)=0,
\end{equation}
where $\alpha_{1}, \alpha_{2}$ and $\beta_{1}, \beta_{2}$, are given constants and $k(x)$, $k'(x)$, $q(x)$, and $\varrho(x)$ in (\ref{eq:1}) are to be assumed continuous for $x \in [a,b]$. If one of the boundary points $a$ and $b$ is singular (i.e. $k(a) = 0$ or $k(b) = 0$), the problem is transformed to a singular Sturm-Liouville problem.

Let $y_{n}$ and $y_{m}$ two eigenfunctions of the operator
$D(k(x)D) -q(x)I$, where $D$ is the standard derivative operator.
According to Sturm-Liouville theory \cite{MR922041}, they are orthogonal with respect to the weight function $\varrho(x)$ under the given conditions (\ref{eq:2}) and satisfy the orthogonality relation
\begin{equation*}
\int_{a}^{b} \varrho(x) y_{n}(x) y_{m}(x) dx =
\left( \int_{a}^{b} \varrho(x) y_{n}^{2}(x) dx \right) \delta_{n,m}.
\end{equation*}
Many of special functions are orthogonal solutions of a regular or singular Sturm-Liouville problem having the symmetry property ($\phi_{n}(-x)=(-1)^{n} \phi_{n}(x)$) so that have found valuable applications in physics and engineering, as already mentioned. In \cite{MR2374588}, the classical equation (\ref{eq:1}) is symmetrically extended as follows.

\begin{theorem}\cite{MR2374588}\label{firstth}
Let $\phi_{n}(-x)=(-1)^{n} \phi_{n}(x)$  be a sequence of symmetric functions satisfying the equation
\begin{equation}\label{eq:nova7}
A(x) \phi_{n}''(x) + B(x) \phi_{n}'(x) + \left( \lambda_{n} C(x) + D(x) + \sigma_{n} E(x)\right) \phi_{n}(x)=0,
\end{equation}
where
\begin{equation}\label{eq:sigman}
\sigma_{n}=\frac{1-(-1)^{n}}{2}=\begin{cases} 0, & n \text{ even,} \\ 1,& n \text{ odd,}\end{cases}
\end{equation}
and $\lambda_{n}$ is a sequence of constants. If $A(x)$, $(C(x)>0)$, $D(x)$ and $E(x)$ are even functions and $B(x)$ is odd then
\begin{equation*}
\int_{-\nu}^{\nu} \varrho^{*}(x) \phi_{n}(x) \phi_{m}(x) dx = \left( \int_{-\nu}^{\nu}\varrho^{*}(x) \phi_{n}^{2}(x) dx \right) \delta_{n,m},
\end{equation*}
where
\begin{equation}\label{eq:6}
\varrho^{*}(x)= C(x) {\exp} \left( \int \frac{B(x)-A'(x)}{A(x)} dx \right) = \frac{C(x)}{A(x)} {\exp} \left( \int \frac{B(x)}{A(x)} dx \right).
\end{equation}
The weight function defined in (\ref{eq:6}) must be positive and even on $[-\nu,\nu]$ and the function
\begin{equation*}
A(x)K(x)=A(x) {\exp} \left( \int \frac{B(x)-A'(x)}{A(x)} dx\right) = {\exp} \left( \int \frac{B(x)}{A(x)} dx\right)
\end{equation*}
must vanish at $x = \nu$ , i.e. $A(\nu) K(\nu) = 0$. In this way, since $K(x) = \varrho^{*}(x) /C(x)$ is an even function so $A(-\nu) K(-\nu) = 0$ automatically.
\end{theorem}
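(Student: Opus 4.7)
The plan is to rewrite (\ref{eq:nova7}) in self-adjoint form via an integrating factor, apply a Lagrange-type identity, and then dispose of the non-classical term $\sigma_n E(x)\phi_n(x)$ by a parity argument. First, I seek a factor $K(x)$ so that multiplying (\ref{eq:nova7}) by $K$ collects the first two terms into $(AK\phi_n')'$; this requires $(AK)'=BK$, i.e.\ $K(x)=\exp\int\frac{B(x)-A'(x)}{A(x)}\,dx$, so $\varrho^*(x)=C(x)K(x)$ matches (\ref{eq:6}). Since $A,C$ are even and $B$ is odd, $A'$ is odd and $(B-A')/A$ is odd, so its primitive is even; hence both $K$ and $\varrho^*$ are even. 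After multiplication by $K$, equation (\ref{eq:nova7}) becomes
\[
\bigl(A(x)K(x)\phi_n'(x)\bigr)' + \bigl(\lambda_n \varrho^*(x) + K(x)D(x) + \sigma_n K(x)E(x)\bigr)\phi_n(x) = 0.
\]

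Next, I would write the corresponding equation for $\phi_m$, form the Lagrange combination $\phi_m\cdot(\text{equation for }\phi_n) - \phi_n\cdot(\text{equation for }\phi_m)$, and integrate over $[-\nu,\nu]$. The $K D \phi_n\phi_m$ term cancels, and integration by parts reduces the second-order part to the boundary expression $\bigl[AK(\phi_m\phi_n'-\phi_n\phi_m')\bigr]_{-\nu}^{\nu}$, which vanishes by the hypothesis $A(\nu)K(\nu)=0$ together with the evenness of $AK$. What remains is
\[
(\lambda_n-\lambda_m)\int_{-\nu}^{\nu}\varrho^*(x)\phi_n(x)\phi_m(x)\,dx + (\sigma_n-\sigma_m)\int_{-\nu}^{\nu}K(x)E(x)\phi_n(x)\phi_m(x)\,dx = 0.
\]

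Finally I would split by the parities of $n$ and $m$. If $n$ and $m$ have the same parity and $n\neq m$, then $\sigma_n=\sigma_m$, the second integral disappears, and, provided $\lambda_n\neq\lambda_m$ within each parity class, the classical Sturm--Liouville conclusion $\int\varrho^*\phi_n\phi_m\,dx=0$ follows. If $n$ and $m$ have opposite parity, then $\phi_n(x)\phi_m(x)$ is odd and $\varrho^*$ is even, so the integrand $\varrho^*\phi_n\phi_m$ is odd on the symmetric interval $[-\nu,\nu]$ and the orthogonality integral vanishes automatically, without invoking the Lagrange identity at all. The main conceptual obstacle is precisely that the extra term $\sigma_n E(x)\phi_n(x)$ breaks the classical self-adjoint structure; the resolution is the observation that the opposite-parity case, which is the only one where $\sigma_n\neq\sigma_m$, is already settled by the symmetry of the weight, so the term $(\sigma_n-\sigma_m)\int KE\phi_n\phi_m\,dx$ never needs to be controlled directly.
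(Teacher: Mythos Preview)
Your argument is correct and is precisely the standard proof of this extended Sturm--Liouville result: self-adjoint form via the integrating factor $K$, Lagrange identity, boundary term killed by $A(\nu)K(\nu)=0$ and evenness, and the parity split to neutralize the $\sigma_n E$ term. Note, however, that the present paper does \emph{not} prove Theorem~\ref{firstth}; it is quoted verbatim from \cite{MR2374588} and used as background. So there is no in-paper proof to compare against---your write-up is essentially what one finds in the original reference. The only point worth flagging explicitly (you do mention it) is the standing assumption that $\lambda_n\neq\lambda_m$ for $n\neq m$ of the same parity, which is part of the Sturm--Liouville setup rather than something derived here.
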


Using this theorem, many symmetric special functions of continuous type have been generalized in \cite{MR2270049, MR2374588,MR2421844,MR2601301,MR2467682,MR2743534,1220.33011}.

Orthogonal functions of a discrete variable can similarly be solutions of a regular or singular Sturm-Liouville problem of discrete type in the form \cite{0817.39004}
\begin{equation}\label{eq:8}
\Delta \left( K^{*}(x) \nabla y_{n}(x) \right) + \left( \lambda_{n} w(x)-q^{*}(x) \right)y_{n}(x)=0 \qquad (K^{*}(x)>0, w(x)>0),
\end{equation}
where
\begin{equation*}
\Delta f(x)=\nabla f(x+1)=f(x+1)-f(x),
\end{equation*}
and (\ref{eq:8}) satisfies a set of discrete boundary conditions like (\ref{eq:2}). This means that if $y_{n}(x)$ and $y_{m}(x)$ are two eigenfunctions of difference equation (\ref{eq:8}), they are orthogonal with respect to the weight function $\varrho^{*}(x)$ on a discrete set \cite{MR1149380}.

Recently in \cite{MASJEDAREA} we have presented the following theorem by which one can generalize usual Sturm-Liouville problems with symmetric solutions in discrete spaces. As a very important consequence of this theorem, we can introduce a basic class of symmetric orthogonal polynomials of a discrete variable with four free parameters.

\vspace*{0.2cm}
\begin{theorem}\cite{MASJEDAREA}\label{th:12}
Let $\phi_{n}(-x)=(-1)^{n} \phi_{n}(x)$  be a sequence of symmetric functions that satisfying the difference equation
\begin{multline}\label{eq:9}
A(x) \Delta \nabla \phi_{n}(x) + \left(A(-x)-A(x) \right) \Delta \phi_{n}(x) \\ + \left( \lambda_{n} C(x) + D(x) + \sigma_{n} E(x) \right) \phi_{n}(x) =0,
\end{multline}
where, as usual, $\Delta \nabla=\Delta-\nabla$. If $A(x)$ is a free real function and $(C(x)>0)$, $D(x)$ and $E(x)$ are even functions then
\begin{equation*}
\sum_{x=\alpha}^{\beta-1} W^{*}(x) \phi_{n}(x) \phi_{m}(x)  = \left[ \sum_{x=\alpha}^{\beta-1} W^{*}(x) \phi_{n}^{2} (x) \right] \delta_{n,m}
\end{equation*}
where
\begin{equation}\label{eq:17}
W^{*}(x)=C(x)W(x)
\end{equation}
and $W(x)$ is the solution of the Pearson difference equation
\begin{equation*}
\Delta \left[A(x)W(x) \right]  = \left[ A(-x)-A(x)\right] W(x),
\end{equation*}
which is equivalent to
\begin{equation}\label{eq:19}
\frac{W(x+1)}{W(x)} = \frac{A(-x)}{A(x+1)}.
\end{equation}
Moreover, the weight function defined in (\ref{eq:17}) must be even over one of the four following symmetric counter sets
\begin{itemize}
\item[i)] $S_{1}=\left \{-a-n,-a-n+1,\dots,-a-1,-a,a,a+1,\dots,a+n \right \}$, $a \in {\mathbf{R}}$,
\item[ii)] $S_{2}=S_{1} \cup \{0\}$ (as any odd function is equal to zero at $x=0$),
\item[iii)] $S_{3}=\left \{\dots, -a-n,-a-n+1,\dots,-a-1,-a,a,a+1,\dots,a+n, \dots \right \}$ (an infinite set)
\item[iv)] $S_{4}=S_{3} \cup \{0\}$,
\end{itemize}
and the function $A(x)W(x)$ must also vanish at $x=\alpha$ and $x=1-\beta$, where $[\alpha,\beta-1] \in \left \{S_{1},S_{2},S_{3},S_{4} \right \}$.
\end{theorem}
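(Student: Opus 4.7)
The plan is to rewrite equation (\ref{eq:9}) in a self-adjoint form and then run the classical Sturm--Liouville orthogonality argument, using the symmetry of the supports $S_i$ to dispose of the nonclassical term $\sigma_n E(x)\phi_n(x)$. First, using $\Delta\nabla=\Delta-\nabla$, the leading two terms of (\ref{eq:9}) collapse to $A(-x)\Delta\phi_n(x)-A(x)\nabla\phi_n(x)$. The discrete product rule gives $\Delta\bigl[f(x)\nabla\phi(x)\bigr]=f(x+1)\Delta\phi(x)-f(x)\nabla\phi(x)$, so choosing $f(x)=A(x)W(x)$ one finds $W(x)\bigl[A(-x)\Delta\phi_n(x)-A(x)\nabla\phi_n(x)\bigr]=\Delta\bigl[A(x)W(x)\nabla\phi_n(x)\bigr]$ exactly when $A(x+1)W(x+1)=A(-x)W(x)$, which is Pearson's equation (\ref{eq:19}). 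Multiplying (\ref{eq:9}) by $W(x)$ therefore produces the self-adjoint difference equation $\Delta[A(x)W(x)\nabla\phi_n(x)]+W(x)\bigl[\lambda_n C(x)+D(x)+\sigma_n E(x)\bigr]\phi_n(x)=0$, i.e.\ the discrete Sturm--Liouville equation (\ref{eq:8}) augmented by the parity-dependent term.

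Next, I would write this self-adjoint equation for indices $n$ and $m$, multiply by $\phi_m$ and $\phi_n$ respectively, subtract, and sum over $\alpha\le x\le\beta-1$. The $D(x)$ contributions cancel. For the two $\Delta[AW\nabla\phi]$ pieces I would apply the discrete Abel formula $\sum u\,\Delta v=[uv]_\alpha^\beta-\sum v(x+1)\,\Delta u(x)$; the surviving interior sum is $\sum A(x+1)W(x+1)\,\Delta\phi_n(x)\,\Delta\phi_m(x)$, which is symmetric in $n,m$ and cancels upon subtraction. What is left is the discrete Wronskian-type boundary term $\bigl[A(x)W(x)\bigl(\phi_m(x)\nabla\phi_n(x)-\phi_n(x)\nabla\phi_m(x)\bigr)\bigr]_{x=\alpha}^{x=\beta}$. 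The upper-endpoint factor $A(\beta)W(\beta)$ is rewritten via Pearson as $A(1-\beta)W(\beta-1)$, and since $W$ inherits the evenness of $W^{*}/C$ this equals $A(1-\beta)W(1-\beta)$; together with $A(\alpha)W(\alpha)=0$, the standing hypothesis that $A(x)W(x)$ vanishes at $x=\alpha$ and $x=1-\beta$ annihilates both boundary contributions. What remains is the identity $(\lambda_n-\lambda_m)\sum W^{*}(x)\phi_n(x)\phi_m(x)+(\sigma_n-\sigma_m)\sum W(x)E(x)\phi_n(x)\phi_m(x)=0$.

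Two parity regimes then close the argument. If $n-m$ is even then $\sigma_n=\sigma_m$ and the $E$-sum disappears, so under the standard hypothesis $\lambda_n\ne\lambda_m$ one obtains $\sum W^{*}(x)\phi_n(x)\phi_m(x)=0$. If $n-m$ is odd then $\phi_n(x)\phi_m(x)$ is an odd function of $x$; combined with the assumed evenness of $W^{*}$ and of $E$ (and hence of $WE$, since $W=W^{*}/C$ is even), both summands on the left are odd, and summing an odd function over any of the symmetric counter sets $S_1,\dots,S_4$ produces zero automatically---this is precisely where the symmetry of the support enters. Combining both cases gives the orthogonality relation stated in the theorem. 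The main obstacle is the bookkeeping in the middle step, specifically matching the upper-boundary factor that drops out of Abel summation with the stated vanishing $A(1-\beta)W(1-\beta)=0$ by invoking Pearson together with the evenness of $W$; once this is set up, the interplay between the classical discrete Sturm--Liouville machinery and the parity of the support handles the novel $\sigma_n E(x)$ contribution essentially for free.
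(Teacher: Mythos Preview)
Your argument is correct and follows the same Sturm--Liouville strategy that underlies the theorem. Note that the paper does not actually prove Theorem~\ref{th:12}; it is quoted from \cite{MASJEDAREA}. The closest thing to a proof in the present paper is the computation in Section~3 (equations (\ref{eq:rem3})--(\ref{eq:rem7})), which runs the identical argument for the particular equation (\ref{eq:16}) on the symmetric interval $[-\theta,\theta]$.

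The one cosmetic difference worth noting is how the boundary terms are dispatched. You kill the two endpoint contributions $A(\alpha)W(\alpha)$ and $A(\beta)W(\beta)$ separately, converting the latter to $A(1-\beta)W(1-\beta)$ via Pearson plus the evenness of $W$. The paper instead exploits $\phi_n(-x)=(-1)^n\phi_n(x)$ together with Pearson to \emph{combine} the two boundary pieces into the single expression $A(-\theta)W(\theta)\bigl(1+(-1)^{n+m}\bigr)\bigl(\phi_m(\theta)\phi_n(\theta+1)-\phi_n(\theta)\phi_m(\theta+1)\bigr)$; this vanishes either because $n+m$ is odd or because $A(-\theta)=0$. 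Since on the symmetric supports one has $\alpha=-\theta$ and $1-\beta=-\theta$, the two vanishing conditions you invoke collapse to the single condition $A(-\theta)W(-\theta)=0$, so the two treatments are equivalent. Your separate handling is slightly more general in spirit, while the paper's combined form makes the role of the parity factor $(1+(-1)^{n+m})$ more visible.
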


\section{A basic class of symmetric orthogonal polynomials of a discrete variable using Theorem \ref{th:12}}

As a special case of equation (\ref{eq:nova7}), the following differential equation is defined in \cite{MR2270049}:
\begin{multline}\label{eq:240}
x^{2}(p^{*}x^{2}+q^{*}) \Phi_{n}''(x) + x(r^{*}x^{2}+s^{*}) \Phi_{n}'(x)-(n(r^{*}+(n-1)p^{*})x^{2}+\sigma_{n} s^{*}) \Phi_{n}(x)=0.
\end{multline}
Here one of the basic solutions is the symmetric class of orthogonal polynomials denoted by
\begin{multline*}
\ssstar{n}{r^{*}}{s^{*}}{p^{*}}{q^{*}}{t}=\sum_{k=0}^{[n/2]} \binom{[n/2]}{k} \\ \times \left( \prod_{i=0}^{[n/2]-(k+1)} \frac{(2i+(-1)^{n+1}+2[n/2])p^{*}+r^{*}}{(2i+(-1)^{n+1}+2)q^{*}+s^{*}} \right) x^{n-2k}.
\end{multline*}
By referring to theorem \ref{firstth}, we observe in (\ref{eq:240}) that $A(x)=x^{2}(p^{*}x^{2}+q^{*})$ is a polynomial of degree at most four, $C(x)=x^{2}$ is a symmetric quadratic polynomial, $D(x) = 0$, and $E(x) = s^*$. Since discrete orthogonal polynomials have a direct relationship with continuous polynomials, motivated by (\ref{eq:240}), we suppose in the main equation (\ref{eq:9}) that
\begin{equation}\label{eq:14}
A(x)=\sum_{i=0}^{4} a_{i}x^{i}, \quad C(x)=c_{2}x^{2}+c_{0}, \quad D(x)=0, \quad E(x)=e_{0},
\end{equation}
though there may exist some other possible cases.

By noting the assumptions (\ref{eq:14}), we are now interested in obtaining a symmetric orthogonal polynomial solution. Hence, let
\begin{equation}\label{eq:29}
\phi_{n}(x)=x^{n}+\delta_{n} x^{n-2} + \cdots,
\end{equation}
satisfy a three term recurrence relation as
\begin{equation}\label{eq:30}
\phi_{n+1}(x)=x \phi_{n}(x)-\gamma_{n} \phi_{n-1}(x), \quad ({\rm{with }}\,\,\, \phi_{0}(x)=1, \,\,\, \phi_{1}(x)=x).
\end{equation}

From (\ref{eq:9}), (\ref{eq:14}) and (\ref{eq:29}), equating the coefficient in $x^{n+2}$ gives
\begin{equation}\label{eq:lambdan}
\lambda_{n}=\frac{n \left(2 a_3-a_4 (n-1)\right)}{c_2},
\end{equation}
provided that $c_{2} \neq 0$ and also $\vert a_{3} \vert + \vert a_{4} \vert \neq 0$.

By using the eigenvalue $\lambda_{n}$ in (\ref{eq:lambdan}) and equating the coefficient in $x^{n}$ we obtain
\begin{multline*}
\delta_{n}=\left\{6 c_2 \left(4 a_1 n-2 a_2 (n-1) n+e_0
   \left((-1)^n-1\right)\right)  \right. \\ \left.  +a_4 (n-1) n  \left(12 c_0-c_2 (n-3)
   (n-2)\right)  \right. \\ \left. +4 a_3 n \left(c_2 (n-2) (n-1)-6 c_0\right) \right\}/\left\{ 24 c_2 \left(a_4 (3-2 n)+2 a_3\right) \right \}.
\end{multline*}

Also from (\ref{eq:29}) and (\ref{eq:30}) we have
\begin{equation*}
x^{n+1} +  \delta_{n+1} x^{n-1} + \cdots = x \left( x^{n} +  \delta_{n} x^{n-2} + \cdots  \right) - \gamma_{n} \left( x^{n-1} +  \delta_{n-1} x^{n-3}  + \cdots \right),
\end{equation*}
which implies
\begin{equation}\label{eq:27new}
\gamma_{n}=\delta_{n}-\delta_{n+1}.
\end{equation}

Therefore, in order that $\phi_{n}(x)$ is a solution of (\ref{eq:9}), the following extra conditions must be considered for the initial values of $n$:
\begin{equation*}
e_{0}=2 a_1-\frac{2 a_3 c_0}{c_2}, \quad a_{0}=\frac{c_0 \left(\left(2 a_3-a_4\right) c_0+\left(a_2-2 a_1\right)   c_2\right)}{c_2^2}, \quad c_{2}=-4 c_0.
\end{equation*}

As a conclusion, we get
\begin{equation}\label{eq:28new}
\begin{cases}
A(x)=\displaystyle{a_4 x^4+a_3 x^3+a_2 \left(x^2-\frac{1}{4}\right)+a_1
   \left(x+\frac{1}{2}\right)+\frac{a_3}{8}-\frac{a_4}{16}} , \\
B(x)=A(-x)-A(x)=-2 x \left(a_3 x^2+a_1\right), \\
C(x)=c_0 \left(1-4 x^2\right), \\
D(x)=0, \\
E(x)=\displaystyle{\frac{1}{2} \left(4 a_1+a_3\right)}, \\[3mm]
\lambda_{n}=\displaystyle{\frac{n \left(a_4 (n-1)-2 a_3\right)}{4 c_0}},
\end{cases}
\end{equation}
and
\begin{multline}\label{eq:29new}
\delta_{n}=\left\{ 12 a_1 \left(2 n+(-1)^n-1\right)+3 a_3 \left((-1)^n-1\right) \right. \\
\left.  +n   \left(-12 a_2 (n-1)+2 a_3 (2 (n-3) n+7) \right. \right. \\ \left. \left. -a_4 (n-1) ((n-5)   n+9)\right) \right\}/\left\{ 24 a_4 (3-2 n)+48 a_3 \right\}.
\end{multline}

For simplicity, if we set
\begin{equation*}
a_{4}=2a, \quad a_{3}=a+2b, \quad a_{2}=b+2c, \quad a_{1}=c+2d,
\end{equation*}
then (\ref{eq:28new}) changes to
\begin{equation*}
\begin{cases}
A(x)=(2x+1)(a x^{3}+b x^{2}+c x+d), \\
B(x)=-2 x \left((a+2 b)x^2 +c+2 d\right), \\
C(x)=\displaystyle{\frac{1}{4}- x^2}, \\
D(x)=0, \\
E(x)=\displaystyle{\frac{1}{2}\left(a+2b+4 c+8 d \right)}, \\
\lambda_{n}={2n (a n-2 (a+b))} \text{ for } \vert a \vert + \vert b \vert \neq 0.
\end{cases}
\end{equation*}
Hence the following difference equation appears
\begin{multline}\label{eq:16}
(2x+1)(a x^{3}+b x^{2}+c x+d) \Delta \nabla \phi_{n}(x) -2 x \left(x^2 (a+2 b)+c+2 d\right) \Delta \phi_{n}(x) \\
 + \left( 2n (a n-2 (a+b)) \left( \frac{1}{4}- x^2 \right) + \sigma_{n} \left( \frac{a}{2}+b+2 c+4 d \right) \right) \phi_{n}(x) =0.
\end{multline}

Since the polynomial solution of equation (\ref{eq:16}) is symmetric, we use the notation
\begin{equation*}
\phi_{n}(x)=\phin{n}{a}{b}{c}{d}{x},
\end{equation*}
for mathematical formulae and $S_{n}(a,b,c,d;x)$ in the sequel. This means that from now we deal with just one characteristic vector  $\vec{V}=(a,b,c,d)$ for any given sub-case.

If the polynomial sequence $\{S_{n}(a,b,c,d;x)\}_{n \geq 0}$ satisfies a three term recurrence relation of type (\ref{eq:30}) then by referring to (\ref{eq:27new}) and (\ref{eq:29new}) we obtain 
\begin{proposition}
The coefficient $\gamma_{n}$ in the three term recurrence relation (\ref{eq:30}) is a rational expression in $n$ where the numerator is a polynomial of degree 4 and the denominator is a polynomial of degree 2, given by
\begin{equation}\label{eq:gmn}
\gamma_{n}=\gamma_{n}\begin{pmatrix} a & b \\ c & d \end{pmatrix}  =\frac{\sum_{i=0}^{4} K_{i}(a,b,c,d)n^{i}}{32 (b-a (n-2)) (b-a (n-1))},
\end{equation}
where
\begin{align*}
K_{4}(a,b,c,d)&=-2 a^2 , \\
K_{3}(a,b,c,d)&= 4 a (3 a+2 b), \\
K_{2}(a,b,c,d)&=-8 \left(3 a^2+a (4 b+c)+b^2\right) , \\
K_{1}(a,b,c,d)&=2 (3 a+2 b) (3 a+4 (b+c))-2 a (-1)^n (a+2 b+4 c+8 d) , \\
K_{0}(a,b,c,d)&= \left((-1)^n-1\right) (3 a+2 b) (a+2 b+4 c+8 d).
\end{align*}
\end{proposition}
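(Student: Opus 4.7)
The plan is to prove the formula by a direct calculation from the recursion $\gamma_{n} = \delta_{n} - \delta_{n+1}$ established in (\ref{eq:27new}), using the explicit expression (\ref{eq:29new}) for $\delta_{n}$ after inserting the reparametrization $a_{4}=2a$, $a_{3}=a+2b$, $a_{2}=b+2c$, $a_{1}=c+2d$. The structure of the claim — a quartic numerator over the product of two linear factors in $n$ — is essentially forced by this construction once the denominators of $\delta_{n}$ and $\delta_{n+1}$ are identified.

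First I would simplify the denominator of (\ref{eq:29new}) under the substitution:
$$
24\,a_{4}(3-2n)+48\,a_{3} \;=\; 48a(3-2n)+48(a+2b) \;=\; 96\bigl(b-a(n-2)\bigr),
$$
so that $\delta_{n}=M(n)/\bigl(96(b-a(n-2))\bigr)$ for a polynomial $M(n)$ in $n$ of degree $4$ with coefficients in $a,b,c,d$ and a single parity term in $(-1)^{n}$. Shifting $n\mapsto n+1$ then gives $\delta_{n+1}$ with denominator $96(b-a(n-1))$, and one must carefully track $(-1)^{n+1}=-(-1)^{n}$ so that the parity contributions of $\delta_{n}$ and $-\delta_{n+1}$ add rather than cancel.

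Next, forming $\gamma_{n}=\delta_{n}-\delta_{n+1}$ over the common denominator $96(b-a(n-2))(b-a(n-1))$ produces a numerator that, after expansion, is divisible by $3$. Cancelling this factor against $96=3\cdot 32$ yields the stated denominator $32(b-a(n-2))(b-a(n-1))$. Splitting the resulting numerator into a part independent of parity and a part proportional to $(-1)^{n}$ and collecting powers of $n$ then gives the coefficients $K_{i}(a,b,c,d)$. Notice that the parity pieces in $\delta_{n}$ and $-\delta_{n+1}$ are each constant multiples (involving $a+2b+4c+8d$) divided by a linear function of $n$; after the common denominator is formed their combined contribution is linear in $n$, which explains why $(-1)^{n}$ appears only in $K_{0}$ and $K_{1}$, as stated.

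The main obstacle is purely book-keeping: four free parameters plus the parity term make the expansion lengthy. I would either perform the expansion with a computer algebra system, or, more economically, use the fact that both sides of (\ref{eq:gmn}) are, for each fixed parity of $n$, rational functions of $n$ of the prescribed shape (quartic over quadratic); matching their values at five values of $n$ in each parity class — say $n=0,2,4,6,8$ and $n=1,3,5,7,9$ — would then determine the coefficients uniquely and verify the identity. The low values $n=0,1,2$ can be computed directly from $\phi_{0}=1$, $\phi_{1}=x$, and (\ref{eq:16}), giving an independent check on $K_{0}$, $K_{1}$ and the leading behaviour that propagates the rest of the formula through the recursion.
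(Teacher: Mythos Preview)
Your approach is correct and is exactly what the paper does: it derives the proposition ``by referring to (\ref{eq:27new}) and (\ref{eq:29new})'', i.e.\ by computing $\gamma_{n}=\delta_{n}-\delta_{n+1}$ from the explicit $\delta_{n}$ after the reparametrization $a_{4}=2a$, $a_{3}=a+2b$, $a_{2}=b+2c$, $a_{1}=c+2d$. Your additional remarks on the denominator simplification and the parity structure are accurate and go beyond the paper, which offers no further detail.
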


For $n=2m$ and $n=2m+1$, $\gamma_n$ in (\ref{eq:gmn}) becomes
\begin{equation*}
\gamma _{2m}  = \frac{m \left(-a^2 (m-1)^3+a \left(2 b (m-1)^2+c(1-m)-d\right)+b (b(1-m)+c)\right)}{(b-2 a (m-1)) (b - a (2 m - 1))}\,,
\end{equation*}
and
\begin{equation*}
\gamma _{2m + 1}  = \frac{{\left( {a(m -1) - b} \right)\left( { - am^3  + b\,m^2  - c\,m + d} \right)}}{{(2a\,m - (a + b))\,(2a\,m - b)}}\,.
\end{equation*}

\begin{remark}\label{remark:positivity}
Once we have explicitly determined $\gamma_{n}$ in the recurrence relation (\ref{eq:30}), a discussion about the situation of this coefficient is extremely important. For instance, analyzing the location of the zeros of orthogonal polynomials would give rise to a positive definite case when $\gamma _n  > 0\,\,\,(\forall n \in {\mathbf{N}})$, the quasi-definite case when $\gamma_{n} \neq 0$, and weak orthogonality case when $\gamma_{n}=0$ for some values of $n$.
However, this discussion completely depends on the four parameters $a,b,c$ and $d$, because $\gamma_{n}$ is in general a rational expression in $n$. In section \ref{sec:5} we provide factorized representations of $\gamma_{n}$ in two hypergeometric sequences, which would allow us to easily determine the orthogonality situation based on the parameters.
\end{remark}

\begin{theorem}\label{theorem:explicit}
The explicit form of the polynomial $S_{n}(a,b,c,d;x)$ is
\begin{multline}\label{eq:explicit2m}
\phin{n}{a}{b}{c}{d}{x} = x^{\sigma_{n}} \sum_{j=0}^{[n/2]} (-1)^{j} \binom{[n/2]}{j} \\ \times\left( \prod_{i=j}^{[n/2]-1} \frac{a(i+\sigma_{n})^{3}-b(i+\sigma_{n})^{2}+c(i+\sigma_{n})-d}{a(i+[n/2]-\sigma_{n+1})-b} \right) (\sigma_{n}-x)_{j} (\sigma_{n}+x)_{j},
\end{multline}
where $[x]$ denotes the integer part of $x$, $(A)_{n}=A(A+1)\cdots(A+n-1)=\Gamma(A+n)/\Gamma(A)$ for $n \geq 1$ with $(A)_{0}=1$ and $\prod\limits_{i = 0}^{ - 1} {(.)}  = 1$.

Moreover, since $(-x)_{j}(x)_{j}=(-1)^{j} \prod_{k=0}^{j-1}(x^2-k^2)$, for $n=2m$ and $n=2m+1$ (\ref{eq:explicit2m}) becomes
\begin{equation*}
\phin{2m}{a}{b}{c}{d}{x}=P_{m}(x^{2})=\sum_{j=0}^{m}   \binom{m}{j} \left( \prod_{i=j}^{m-1} \frac{ai^{3}-bi^{2}+ci-d}{a(i+m-1)-b} \right) \prod_{k=0}^{j-1}(x^2-k^2), \\
\end{equation*}
and
\begin{multline*}
\phin{2m+1}{a}{b}{c}{d}{x}=xR_{m}(x^2)\\= x \sum_{j=0}^{m}  \binom{m}{j} \left( \prod_{i=j}^{m-1} \frac{a (i+1)^{3}-b (i+1)^{2}+c (i+1)-d}{a(i+m)-b} \right) \prod_{k=1}^{j}(x^2-k^2).
\end{multline*}

\end{theorem}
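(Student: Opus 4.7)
The strategy is to verify directly that the series on the right-hand side of (\ref{eq:explicit2m}) is annihilated by the difference operator in (\ref{eq:16}) with the eigenvalue $\lambda_n$ obtained in (\ref{eq:lambdan}). Because both $\phi_n(x)$ and equation (\ref{eq:16}) respect the symmetry $x\mapsto -x$, I would first reduce to a single parity sector by writing $\phi_n(x) = x^{\sigma_n} Q_{[n/2]}(x^2)$, which decouples (\ref{eq:16}) into an equation for $Q_{[n/2]}$ alone. The natural basis in which to expand $Q_{[n/2]}$ is the shifted factorial basis $\{(\sigma_n-x)_j(\sigma_n+x)_j\}_{j=0}^{[n/2]}$: on the uniform lattice a short calculation shows that both $\Delta\nabla$ and $\Delta$ applied to $x^{\sigma_n}(\sigma_n-x)_j(\sigma_n+x)_j$ produce only the two basis elements of index $j$ and $j+1$, so the substitution of the ansatz will collapse into a two-term recurrence for the expansion coefficients.

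After inserting $\phi_n(x) = x^{\sigma_n}\sum_{j=0}^{[n/2]} A_j\,(\sigma_n-x)_j(\sigma_n+x)_j$ into (\ref{eq:16}) and collecting the coefficient of $x^{\sigma_n}(\sigma_n-x)_j(\sigma_n+x)_j$, I expect a two-term recurrence of the schematic form
\[
\bigl[a\bigl(j+[n/2]-\sigma_{n+1}\bigr)-b\bigr]\,A_{j}
= -\bigl[a(j+\sigma_n)^{3}-b(j+\sigma_n)^{2}+c(j+\sigma_n)-d\bigr]\,A_{j+1},
\]
in which the left-hand factor arises from the leading part of $A(x)$ acting on level $j+1$ (combined with $\lambda_n C(x)+\sigma_n E(x)$), and the right-hand factor arises from the lower-degree terms of $A(x)$ together with the $\Delta$-coefficient $A(-x)-A(x)$. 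Iterating this relation downward from the top index and enforcing the monic normalization $A_{[n/2]}=1$ dictated by (\ref{eq:29}) produces a telescoping product that is exactly the product appearing in (\ref{eq:explicit2m}), while the binomial factor $\binom{[n/2]}{j}$ is collected along the way from the denominators. Finally, the two specialized formulas for $\phi_{2m}$ and $\phi_{2m+1}$ follow from the identities $(-x)_j(x)_j=(-1)^j\prod_{k=0}^{j-1}(x^2-k^2)$ and $(1-x)_j(1+x)_j=(-1)^j\prod_{k=1}^{j}(x^2-k^2)$.

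The main obstacle is the bookkeeping in the first step: since $A(x)=(2x+1)(ax^3+bx^2+cx+d)$ carries all four free parameters and has degree four, the explicit action of $A(x)\Delta\nabla$ on $x^{\sigma_n}(\sigma_n-x)_j(\sigma_n+x)_j$ must be computed carefully to confirm that it truly involves only two adjacent basis levels and to identify the numerical coefficients. Once this action is in hand, the interaction with the $\Delta$-term $-2x(a+2b)x^2-2x(c+2d)$ and with $\lambda_n(1/4-x^2)+\sigma_n E(x)$ is algebraic and is forced by the parameter constraints $e_0=2a_1-2a_3c_0/c_2$, $a_0=\dots$, $c_2=-4c_0$ already imposed in Section~2. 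As a fall-back, one can instead verify (\ref{eq:explicit2m}) by induction on $n$ using the three-term recurrence (\ref{eq:30}) together with the $\gamma_n$ supplied by Proposition~\ref{eq:gmn}; this amounts to checking a Pascal-type identity among the displayed products and is mechanical, although substantially longer.
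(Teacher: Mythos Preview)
Your overall strategy coincides with the paper's: expand $\phi_n$ in the factorial basis $\vartheta_{2j+\sigma_n}(x)=(-1)^{j}x^{\sigma_n}(\sigma_n-x)_j(\sigma_n+x)_j$, substitute into (\ref{eq:16}), and read off a recurrence for the expansion coefficients $c_j(n)$. The gap is in your central expectation that this recurrence will be \emph{two}-term. The paper carries out the substitution explicitly---using $\Delta\nabla\vartheta_n=n(n-1)\vartheta_{n-2}$, $x\vartheta_n=\vartheta_{n+1}+\sigma_{n+1}(n/2)^2\vartheta_{n-1}$, and a three-term identity for $\Delta\vartheta_n$---and obtains a genuinely \emph{three}-term relation
\[
E_{j-1}(n)\,c_{j-1}(n)+F_j(n)\,c_j(n)+G_{j+1}(n)\,c_{j+1}(n)=0,
\]
with $G_j$ nonzero in general. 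The extra level arises because $A(x)$ has degree four: after $\Delta\nabla$ lowers $\vartheta_{2j+\sigma_n}$ to $\vartheta_{2(j-1)+\sigma_n}$, multiplication by $A(x)$ (together with the $B(x)\Delta$ contribution) feeds back into indices $j+1$, $j$ \emph{and} $j-1$, not just two of them. So the ``short calculation'' you anticipate will not confirm a two-level action.

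Consequently the product in (\ref{eq:explicit2m}) is not forced by a first-order telescoping. In the paper the three-term recurrence is solved from the top using the two conditions $c_{[n/2]}(n)=1$ and $c_{[n/2]+1}(n)=0$ (the latter is consistent since $E_{[n/2]}(n)=0$), and the closed product form is declared ``explicitly solvable'' without further detail; in practice one checks by direct substitution that the product ansatz satisfies the three-term relation identically in $j$. To make your argument complete you would need to perform that verification rather than appeal to telescoping, or else switch to your stated fallback via (\ref{eq:30}) and the explicit $\gamma_n$.
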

\begin{proof}
By defining the symmetric basis
\begin{multline}\label{eq:basismoments}
\vartheta_{n}(x)=(-1)^{[n/2]} x^{\sigma_{n}}(\sigma_{n}-x)_{[n/2]} (\sigma_{n}+x)_{[n/2]}\\
=(-1)^{[n/2]} x^{\sigma _n } \prod\limits_{k = 0}^{[n/2] - 1} { \left( (k + \sigma _n )^2 -x^{2} \right)} =(-1)^{n} \vartheta_{n}(-x),
\end{multline}

the following straightforward properties are derived
\begin{align}
\Delta \vartheta_{n}(x)&=n \vartheta_{n-1}(x) + \frac{n(n-1)}{2} \vartheta_{n-2}(x)+\sigma_{n+1} \frac{n(n-1)(n-2)}{4} \vartheta_{n-3}(x), \label{deltavn} \\
x \vartheta_{n}(x)&=\vartheta_{n+1}(x) + \sigma_{n+1} \left( \frac{n}{2} \right)^{2} \vartheta_{n-1}(x), \\
\Delta \nabla \vartheta_{n}(x)&=n(n-1) \vartheta_{n-2}(x). \label{deltanablavn}
\end{align}

The expansion of $S_{n}(a,b,c,d;x)$ in terms of the above basis yields
\begin{multline}\label{eq:expansion}
S_{n}(a,b,c,d;x)=\sum_{j=0}^{[n/2]} c_{j}(n) \vartheta_{2j+\sigma_{n}}(x)
\\=\begin{cases}
\displaystyle{\sum_{j=0}^{m} (-1)^{j} c_{j}(2m) (-x)_{j}(x)_{j}}, & n=2m, \\
\displaystyle{\sum_{j=0}^{m} (-1)^{j} c_{j}(2m+1) x(1-x)_{j}(1+x)_{j}}, & n=2m+1,
\end{cases}
\end{multline}
then by using the Navima algorithm \cite{MR1626526,MR1416849,MR1361274} we can reach a solvable recurrence relation for the connection coefficients $c_{j}(n)$. So, by using the latter properties (\ref{deltavn})--(\ref{deltanablavn}) and substituting (\ref{eq:expansion}) in (\ref{eq:16}) we obtain
\begin{equation*}
0=\sum_{j=0}^{[n/2]} c_{j}(n) \left( E_{j}(n) + \vartheta_{2j+\sigma_{n}+2} + F_{j}(n) \vartheta_{2j+\sigma_{n}} +  G_{j}(n) \vartheta_{2j+\sigma_{n}-2} \right),
\end{equation*}
where
\begin{equation*}
E_{j}(n)=2 \left(2 j+\sigma _n-n\right) \left(a (2 j+n-2)+a \sigma _n-2   b\right),
\end{equation*}
\begin{multline*}
F_{j}(n)=4 a (j-1)^2 j (2 j-1) \sigma _{n-1} \\
+\frac{1}{2} \sigma _{n+2} \left(2 j+\sigma _n+1\right){}^2   \left(2 j+\sigma _n-n\right) \left(a (2 j+n-2)+a \sigma _n-2   b\right) \\
+2 j \sigma _{n+1} \left(a \left(4 j^3-6 j^2-j (n-3)   (n+1)-1\right)+2 b (j (-4 j+n+3)-1)\right)\\
+\frac{1}{2} \left(\sigma _n \left(\sigma _n \left(\sigma _n
   \left(\sigma _n \left(8 a j+a \sigma _n-4 a-2 b\right)+a (24
   (j-1) j+5)+4 b (1-3 j)\right) \right. \right. \right. \\
  \left. \left. \left. +2 a (2 j-1) (8 (j-1) j+1)+8 b (2-3   j) j+4 c\right)+a \left(4 (j-1) j (1-2 j)^2+1\right) \right. \right. \\
  \left. \left. +4 j (b (1-4   (j-1) j)+4 c)-4 c\right)-2 n (a+b)+a n^2 \right. \\
   \left.  -4 j (b+4 (c+d))+8 j^2    (b+2 c)\right),
\end{multline*}
and
\begin{multline*}
G_{j}(n)=(j-1) j (2 j-1) \left(2 (j-1) \sigma _{n-1} \left(2 a (j-1)^2   \sigma _{n-1}+b+2 c\right) \right. \\
    \left. -2 \sigma _{n+1} \left((j-1)^2 (a+2   b) \sigma _{n-1}+c+2 d\right)\right) \\
   +\frac{1}{8} \left(2 j+\sigma _n-1\right) \left(2 j+\sigma_n\right) \left(\sigma _n \left(2 j+\sigma _n-1\right)
   \left(\sigma _n \left(2 \left((j-1) \left(-4 j (a+b) \right. \right. \right. \right. \right. \\
    \left. \left. \left. \left. \left. +4 a   j^2+a\right)+2 c\right)+\sigma _n \left(\sigma _n \left(6 a
   j+a \sigma _n-4 a-2 b\right) \right. \right. \right. \right. \\
   +  \left. \left. \left. \left. (2 j-1) (a (6 j-5)-4   b)\right)\right)     +b (4 j-2)-8 (c+d)+8 c j\right)+8 d\right). \\
\end{multline*}
But since $\vartheta_{n}(x)$ is linearly independent, the coefficients $c_{j}(n)$ would satisfy the relation
\begin{equation*}
E_{j-1}(n)c_{j-1}(n) + F_{j}(n) c_{j}(n) + G_{j+1}(n)c_{j+1}(n)=0,
\end{equation*}
which is explicitly solvable with the initial conditions $c_{m}(n)=0$ for $m>[n/2]$, and $c_{[n/2]}(n)=1$, providing (\ref{eq:explicit2m}).
\end{proof}

\begin{remark}\label{remark1}
By substituting the characteristic vector
\begin{equation*}
(a,b,c,d)=\left(\frac{p^{*}}{q^{*}}h,-\frac{p^{*}+r^{*}}{2q^{*}}h,\frac{1}{h},-\frac{q^{*}+s^{*}}{2q^{*}h} \right),
\end{equation*}
and $x=t/h$ in relation (\ref{eq:explicit2m}), the corresponding polynomial satisfies a difference equation of type (\ref{eq:16}) that formally tends to the differential equation (\ref{eq:240}) as $h \to 0$. Hence, the following limit relation can be directly deduced:
\begin{equation*}
\lim_{h \to 0} \phin{n}{\frac{p^{*}}{q^{*}}h}{-\frac{p^{*}+r^{*}}{2q^{*}}h}{\frac{1}{h}}{-\frac{q^{*}+s^{*}}{2q^{*}h}} {\frac{t}{h}} =\ssstar{n}{r^{*}}{s^{*}}{p^{*}}{q^{*}}{t}.
\end{equation*}
\end{remark}

As the recurrence relation (\ref{eq:30}) is explicitly known, the complete form of the orthogonality relation is
\begin{multline}\label{eq:245}
\sum_{x=-\theta}^{\theta} \ww{}{a}{b}{c}{d}{x} \sss{n}{a}{b}{c}{d}{x}  \sss{m}{a}{b}{c}{d}{x} \\
= \prod_{k=1}^{n} \gamma_{k} \begin{pmatrix} a & b \\ c & d \end{pmatrix}\left( \sum_{x=-\theta}^{\theta}  \ww{}{a}{b}{c}{d}{x}  \right) \delta_{n,m},
\end{multline}
where
\begin{equation*}
\ww{}{a}{b}{c}{d}{x}=\left(\frac{1}{4}-x^{2} \right)W(x),
\end{equation*}
is the original weight function and $W(x)$ satisfies the difference equation
\begin{equation}\label{eq:weight}
\frac{W(x+1)}{W(x)}=\frac{(1/2)-x}{(3/2)+x} \frac{-ax^{3}+bx^{2}-cx+d}{a(x+1)^{3}+b(x+1)^{2}+c(c+1)+d}.
\end{equation}

By noting that $A(x)=(2x+1)(ax^{3}+bx^{2}+cx+d)$ for $\vert a \vert + \vert b \vert \neq 0$, two cases can generally happen for the parameter $a$ in (\ref{eq:weight}), i.e. when $a \neq 0$ and $b$ arbitrary or $a=0$ and $b \neq 0$.\\
In the first case, since any arbitrary polynomial of degree 3 has at least one real root, say $x = p \in {\mathbf{R}}$, the aforementioned $A(x)$ can be decomposed in three different forms, i.e.
\begin{multline}\label{eq:nova39}
 A(x) = \,(2x + 1)\,(x - p)\left( {a\,x^2  + u\,x + v} \right) \\
 = \begin{cases}
 (2x + 1)\,(x - p)\left( {a\,x^2  + u\,x + v} \right), & (u^2  < 4av), \\
 (2x + 1)\,a(x - p)(x - q)(x - r), & (u^2  > 4av), \\
 (2x + 1)\,a(x - p)(x - q)^2 ,& (u^2  = 4av). \\
 \end{cases}
\end{multline}

Similarly, in the second case when $a=0$ and $b \neq 0$, $A(x)$ can be decomposed as
\begin{multline}\label{eq:nova40}
 A(x) = \,(2x + 1)\,\left( {b\,x^2  + c\,x + d} \right) \\
= \begin{cases}
 (2x + 1)\,\left( {b\,x^2  + c\,x + d} \right),&(c^2  < 4bd), \\
 (2x + 1)\,b(x - p)(x - q),&(c^2  > 4bd), \\
 (2x + 1)\,b(x - p)^2 ,&(c^2  = 4bd). \\
 \end{cases}
\end{multline}

For the two sub-cases $A(x) = (2x + 1)\,(x - p)\left( {a\,x^2  + u\,x + v} \right)$  in (\ref{eq:nova39}) and  $A(x) = (2x + 1)\left( {b\,x^2  + c\,x + d} \right)$ in (\ref{eq:nova40}), the difference equations corresponding to (\ref{eq:weight}) respectively take the forms
\begin{equation}\label{eq:re39}
\frac{{W(x + 1)}}{{W(x)}} = \frac{{(1/2) - x}}{{(3/2) + x}}\,\frac{{p + x}}{{p - x - 1}}\frac{{a\,x^2  - u\,x + v}}{{a\,(x + 1)^2  + u\,(x + 1) + v}}\,\,\,\,\,\,\,(u^2  < 4av)\,,
\end{equation}
and
\begin{equation}\label{eq:re40}
\frac{{W(x + 1)}}{{W(x)}} = \frac{{(1/2) - x}}{{(3/2) + x}}\,\frac{{b\,x^2  - c\,x + d}}{{b\,(x + 1)^2  + c\,(x + 1) + d}}\,\,\,\,\,\,\,(c^2  < 4bd)\,.
\end{equation}

Since the denominators of the two fractions (\ref{eq:re39}) and (\ref{eq:re40})  are not decomposable in $\mathbf{R}$, we shall deal with them in a separate work [under preparation]. 
However, it is important to note that the cases analyzed in this paper allow us to recover all classical symmetric orthogonal polynomials of a discrete variable (section \ref{section5}).
Hence, let us consider the rest of cases. For the second sub-case of (\ref{eq:nova39}), without loss of generality take $a=1$ and then  $A(x)=(2x+1)(x-p)(x-q)(x-r)$ where $p,q,r \in {\mathbf{R}}$, which indeed covers the third sub-case too. For the second sub-case of (\ref{eq:nova40}) we can similarly consider $A(x)=(2x+1)(x-p)(x-q)$ where $p,q \in {\mathbf{R}}$. This means that there exist only two orthogonal sequences of  $S_{n}(a,b,c,d;x)$ when $A(x)$ is decomposable. Moreover, there exist 20 different weight functions on 32 supports for these two sequences, as the weight functions are not in general unique leading to an indeterminate moment problem. In summary, when the polynomial $A(x)$ is of degree four, a three parametric family (with 14 subcases) appears and when $A(x)$ is of degree three, a two parametric family (with 6 subcases) would appear. Finally, when $A(x)$ is of degree 2, the well known classical symmetric discrete families appear.

\section{Orthogonality supports}

 The question is now how to determine the restrictions of the parameter $\theta$ in the orthogonality support $[-\theta ,\theta ]$ in (\ref{eq:245})? To answer, we should reconsider the main difference equation (\ref{eq:16}) on $[\alpha ,\beta -1] = [-\theta ,\theta ]$ and write it in a self-adjoint form to eventually obtain
\begin{multline}\label{eq:rem3}
\sum_{x=-\theta}^{\theta} \Delta \left( A(x) W(x) \left( \phi_{m}(x) \nabla \phi_{n}(x) - \phi_{n}(x) \nabla \phi_{m}(x) \right) \right) \\ + (\lambda_{n}-\lambda_{m}) \sum_{x=-\theta}^{\theta} \left(\frac{1}{4}-x^{2} \right) W(x) \phi_{n}(x)\phi_{m}(x) \\
+\frac{(-1)^{m}-(-1)^{n}}{2} \left( \frac{a}{2}+b+2c+4d \right) \sum_{x=-\theta}^{\theta} W(x) \phi_{n}(x) \phi_{m}(x)=0.
\end{multline}
On the other side, the identity
\begin{equation*}
\phi_{m}(x) \nabla \phi_{n}(x)-\phi_{n}(x)\nabla \phi_{m}(x)  = \phi_{n}(x) \phi_{m}(x-1) - \phi_{m}(x) \phi_{n}(x-1),
\end{equation*}
simplifies the first sum of (\ref{eq:rem3}) as
\begin{multline}\label{eq:rem5}
\sum_{x=-\theta}^{\theta} \Delta \left( A(x) W(x) \left( \phi_{m}(x) \nabla \phi_{n}(x) - \phi_{n}(x) \nabla \phi_{m}(x) \right) \right) \\
= \left. A(x)W(x) \left( \phi_{n}(x) \phi_{m}(x-1) - \frac{}{} \phi_{m}(x) \phi_{n}(x-1) \right) \right \vert_{x=-\theta}^{x=\theta+1} \\
=A(\theta+1) W(\theta+1) \left(\phi_{n}(\theta+1) \phi_{m}(\theta) - \phi_{m}(\theta+1) \phi_{n}(\theta) \right) \\-
A(-\theta)W(-\theta) \left( \phi_{n}(-\theta) \phi_{m}(-\theta-1) - \phi_{m}(-\theta) \phi_{n}(-\theta-1) \right).
\end{multline}

By taking into account that all weight functions are even, i.e. $W(-x) =W(x)$, the
polynomials are symmetric, i.e. $\phi_{n}(x)=(-1)^{n} \phi_{n}(-x)$, and the Pearson difference equation (\ref{eq:19}) is also valid for $x=\theta$, i.e. $A(\theta+1)W(\theta+1)=A(-\theta)W(\theta)$, relation (\ref{eq:rem5}) would be finally simplified as
\begin{multline}\label{eq:rem7}
\sum_{x=-\theta}^{\theta} \Delta \left( A(x) W(x) \left( \phi_{m}(x) \nabla \phi_{n}(x) - \phi_{n}(x) \nabla \phi_{m}(x) \right) \right) \\
=A(-\theta)W(\theta) \left(1+(-1)^{n+m} \right) \left( \phi_{m}(\theta) \phi_{n}(\theta+1) - \phi_{n}(\theta) \phi_{m}(\theta+1) \right).
\end{multline}

Since $\phi_{m}(\theta) \phi_{n}(\theta+1) - \phi_{n}(\theta) \phi_{m}(\theta+1) \neq 0$, two cases can in general happen for the right hand side of (\ref{eq:rem7}):
\begin{itemize}
\vspace*{0.15cm} \item[i)] If $n + m$ is odd then $1+ (-1)^{n+m} = 0$ and (\ref{eq:rem7}) is automatically zero. However, this case is clear as the sum of any odd summand on a symmetric counter set is equal to
zero.

\item[ii)] If simultaneously $A(-\theta ) = 0$ and $W(\theta) \neq 0$, then (\ref{eq:rem7}) is again equal to zero. This condition is in fact the key point for finding the orthogonality supports.
\end{itemize}

\section{Two hypergeometric orthogonal sequences of $S_{n}(a,b,c,d;x)$}\label{sec:5}

In this section, we introduce two hypergeometric sequences of symmetric orthogonal polynomials, which are particular cases of $S_n(a,b,c,d;x)$ corresponding to two aforementioned cases $a\neq 0$ and $a=0$, respectively and then obtain all possible weight functions together with orthogonality supports for these two sequences.

\subsection{First sequence} If the characteristic vector
\begin{equation*}
(a,b,c,d)=(1,-(p+q+r),pq+pr+qr,-pqr),
\end{equation*}
for $p,q,r \in {\mathbf{R}}$, is replaced in (\ref{eq:16}), then
\begin{multline*}
(2x+1)(x-p)(x-q)(x-r) \Delta \nabla \phi_{n}(x) \\ -2 x \left( x^2 (1-2p-2q-2r)+p q+p r+q r-2 p q r \right) \Delta \phi_{n}(x) \\
 + \left( 2 n (n+2 (p+q+r-1)) \left( \frac{1}{4}- x^2 \right) \right. \\ \left. - \frac{\sigma_{n}}{2}  (2 p-1) (2 q-1) (2 r-1) \right) \phi_{n}(x) =0,
\end{multline*}
has a polynomial solution, which can be represented in terms of hypergeometric series as
\begin{multline}\label{eq:34new}
\sss{n}{1}{-(p+q+r)}{pq+pr+qr}{-pqr}{x}
\\= \frac{(p+\sigma_{n})_{\text{}\left[{n}/{2}\right]} (q+\sigma_{n})_{\text{}\left[{n}/{2}\right]} (r+\sigma_{n})_{\text{}\left[{n}/{2}\right]}}
{\left([n/2]+p+q+r-1+\sigma_{n}\right)_{\text{}\left[{n}/{2}\right]}} \\ \times \ x^{\sigma_{n}} \hyper{4}{3}{-[n/2], \,\, [n/2]+p+q+r-1+\sigma_{n},\,\,\sigma_{n}-x,\,\,\sigma_{n}+x}{p+\sigma_{n},\,\,q+\sigma_{n},\,\,r+\sigma_{n}}{1},
\end{multline}
where $\sigma_{n}$ is defined in (\ref{eq:sigman}) and $_{4} F_3$ is the well known hypergeometric function of order $(4,3)$.
Moreover, it satisfies the recurrence relation
\begin{multline}\label{eq:444}
\phi_{n+1}(x)=x \phi_{n}(x) - \gamma_{n}\begin{pmatrix} {1} & {-(p+q+r)} \\ {pq+pr+qr} & {-pqr} &\end{pmatrix} \phi_{n-1}(x), \\ (\phi_{0}(x)=1, \quad \phi_{1}(x)=x),
\end{multline}
where
\begin{multline}\label{eq:gamman}
\gamma_{n}\begin{pmatrix}  {1} & {-(p+q+r)} \\ {pq+pr+qr} & {-pqr}\end{pmatrix}= \left\{ -2 n^{4} -4 (-3 + 2 p + 2 q + 2 r) n^{3} \right. \\ \left. -8 \left(p^2+p (3 q+3 r-4)+3 q
   r+(q-4) q+r^2-4 r+3\right) n^{2}  \right. \\ \left. + \left( 2 (-1)^n (-1 + 2 p) (-1 + 2 q) (-1 + 2 r)  \right. \right. \\ \left. \left. -
 2 (-3 + 2 p + 2 q + 2 r) (3 + 4 q (-1 + r) - 4 r + 4 p (-1 + q + r)) \right) n \right. \\ \left. + (-1 + (-1)^n) (-1 + 2 p) (-1 + 2 q) (-1 + 2 r) (-3 + 2 p + 2 q + 2 r) \right\}\\ /\left\{32 (n+p+q+r-2) (n+p+q+r-1) \right\},
\end{multline}
dividing to
\[
\gamma_{2n}=-\frac{n (n+p+q-1) (n+p+r-1) (n+q+r-1)}{(2 n+p+q+r-2) (2  n+p+q+r-1)}, \\
\]
and
\[
\gamma_{2n+1}=-\frac{(n+p) (n+q) (n+r) (n+p+q+r-1)}{(2 n+p+q+r-1) (2 n+p+q+r)}.
\]
The latter representations would allow us to analyze the sign of $\gamma_{n}$ in terms of the values of $p$, $q$ and $r$ as explained in Remark \ref{remark:positivity}.

By noting the relations (\ref{eq:444}) and (\ref{eq:gamman}), the orthogonality relation of the first sequence is
\begin{multline*}
\sum_{x=-\theta}^{\theta} \left( \ww{}{1}{-(p+q+r)}{pq+pr+qr}{-pqr}{x} S_{n}(x)S_{m}(x) \right) \\
=\prod_{k=1}^{n} \gamma_{k} \begin{pmatrix} {1} & {-(p+q+r)} \\ {pq+pr+qr} & {-pqr}\end{pmatrix} \\ \times\left(\sum_{x=-\theta}^{\theta} \ww{}{1}{-(p+q+r)}{pq+pr+qr}{-pqr}{x} \right) \delta_{n,m},
\end{multline*}
in which
\begin{equation*}
S_{n}(x)=\sss{n}{1}{-(p+q+r)}{pq+pr+qr}{-pqr}{x},
\end{equation*}
and, finally
\begin{equation*}
\ww{}{1}{-(p+q+r)}{pq+pr+qr}{-pqr}{x}=\left( \frac{1}{4}-x^{2} \right) W(x),
\end{equation*}
denotes the original weight function corresponding to the hypergeometric polynomialdenotes the original weight function corresponding to the hypergeometric polynomial (\ref{eq:34new}). As a consequence, we obtain
\begin{proposition}
The function $W(x)$ satisfies a particular case of the difference equation (\ref{eq:weight}) 
\begin{equation}\label{eq:weightA1}
\frac{W(x+1)}{W(x)}=\frac{-x+(1/2)}{x+(3/2)} \frac{-x-p}{x+1-p} \frac{-x-q}{x+1-q} \frac{-x-r}{x+1-r},
\end{equation}
giving rise to the 16 symmetric solutions of (\ref{eq:weightA1}) listed below
\begin{multline}\label{eq:w1}
W_{1}(x;p,q,r)=\left( \Gamma(1-p+x) \Gamma(1-p-x) \Gamma(1-q+x) \Gamma(1-q-x) \right. \\
\left. \times \Gamma(1-r+x) \Gamma(1-r-x) \Gamma(3/2+x) \Gamma(3/2-x)\right)^{-1},
\end{multline}
\begin{multline}
W_{2,1}(x;p,q,r)\\=\frac{\Gamma(p+x) \Gamma(p-x)}{\Gamma(1-q+x)\Gamma(1-q-x) \Gamma(1-r+x) \Gamma(1-r-x) \Gamma(3/2+x) \Gamma(3/2-x)},
\end{multline}
\begin{multline}
W_{2,2}(x;p,q,r)=W_{2,1}(x;q,p,r)\\=\frac{\Gamma(q+x) \Gamma(q-x)}{\Gamma(1-p+x)\Gamma(1-p-x) \Gamma(1-r+x) \Gamma(1-r-x) \Gamma(3/2+x) \Gamma(3/2-x)},
\end{multline}
\begin{multline}
W_{2,3}(x;p,q,r)=W_{2,1}(x;r,q,p)\\=\frac{\Gamma(r+x) \Gamma(r-x)}{\Gamma(1-p+x)\Gamma(1-p-x) \Gamma(1-q+x) \Gamma(1-q-x) \Gamma(3/2+x) \Gamma(3/2-x)},
\end{multline}
\begin{equation}
W_{3,1}(x;p,q,r)=\frac{\Gamma(p+x) \Gamma(p-x)\Gamma(q+x)\Gamma(q-x)}{ \Gamma(1-r+x) \Gamma(1-r-x) \Gamma(3/2+x) \Gamma(3/2-x)},
\end{equation}
\begin{equation}
W_{3,2}(x;p,q,r)=W_{3,1}(x;p,r,q)=\frac{\Gamma(p+x) \Gamma(p-x)\Gamma(r+x)\Gamma(r-x)}{ \Gamma(1-q+x) \Gamma(1-q-x) \Gamma(3/2+x) \Gamma(3/2-x)},
\end{equation}
\begin{equation}
W_{3,3}(x;p,q,r)=W_{3,2}(x;r,q,p)=\frac{\Gamma(q+x) \Gamma(q-x)\Gamma(r+x)\Gamma(r-x)}{ \Gamma(1-p+x) \Gamma(1-p-x) \Gamma(3/2+x) \Gamma(3/2-x)},
\end{equation}
\begin{equation}
W_{4,1}(x;p,q,r)=\frac{\Gamma(p+x) \Gamma(p-x)\Gamma(-1/2+x) \Gamma(-1/2-x)}{ \Gamma(1-q+x) \Gamma(1-q-x) \Gamma(1-r+x)\Gamma(1-r-x)},
\end{equation}
\begin{multline}
W_{4,2}(x;p,q,r)=W_{4,1}(x;q,p,r)\\=\frac{\Gamma(q+x) \Gamma(q-x)\Gamma(-1/2+x) \Gamma(-1/2-x)}{ \Gamma(1-p+x) \Gamma(1-p-x) \Gamma(1-r+x)\Gamma(1-r-x)},
\end{multline}
\begin{multline}
W_{4,3}(x;p,q,r)=W_{4,1}(x;r,q,p)\\=\frac{\Gamma(r+x) \Gamma(r-x)\Gamma(-1/2+x) \Gamma(-1/2-x)}{ \Gamma(1-p+x) \Gamma(1-p-x) \Gamma(1-q+x)\Gamma(1-q-x)},
\end{multline}
\begin{equation}
W_{5,1}(x;p,q,r)=\frac{\Gamma(p+x) \Gamma(p-x)\Gamma(q+x) \Gamma(q-x)\Gamma(-1/2+x) \Gamma(-1/2-x)}{  \Gamma(1-r+x)\Gamma(1-r-x)},
\end{equation}
\begin{multline}
W_{5,2}(x;p,q,r)=W_{5,1}(x;p,r,q)\\=\frac{\Gamma(p+x) \Gamma(p-x)\Gamma(r+x) \Gamma(r-x)\Gamma(-1/2+x) \Gamma(-1/2-x)}{  \Gamma(1-q+x)\Gamma(1-q-x)},
\end{multline}
\begin{multline}
W_{5,3}(x;p,q,r)=W_{5,1}(x;r,q,p)\\=\frac{\Gamma(q+x) \Gamma(q-x)\Gamma(r+x) \Gamma(r-x)\Gamma(-1/2+x) \Gamma(-1/2-x)}{  \Gamma(1-p+x)\Gamma(1-p-x)},
\end{multline}
\begin{multline}
W_{6}(x;p,q,r)\\=\frac{\Gamma(-1/2+x) \Gamma(-1/2-x)}{\Gamma(1-p+x) \Gamma(1-p-x) \Gamma(1-q+x)\Gamma(1-q-x)  \Gamma(1-r+x)\Gamma(1-r-x)},
\end{multline}
\begin{equation}
W_{7}(x;p,q,r)=\frac{\Gamma(p+x) \Gamma(p-x)\Gamma(q+x) \Gamma(q-x)\Gamma(r+x) \Gamma(r-x)}{  \Gamma(3/2+x) \Gamma(3/2-x)},
\end{equation}
\begin{multline}\label{eq:w8}
W_{8}(x;p,q,r)=\Gamma(p+x) \Gamma(p-x)\Gamma(q+x) \Gamma(q-x) \\
\times \Gamma(r+x) \Gamma(r-x) \Gamma(-1/2+x) \Gamma(-1/2-x).
\end{multline}
\end{proposition}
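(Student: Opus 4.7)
The plan is to split the argument into two essentially independent steps: (i) derive the particular form (\ref{eq:weightA1}) of the Pearson equation from the general (\ref{eq:weight}), and (ii) exhibit and verify the 16 symmetric solutions listed. For step (i), I would substitute $(a,b,c,d)=(1,-(p+q+r),pq+pr+qr,-pqr)$ directly into (\ref{eq:weight}); the Vieta relations yield $ax^{3}+bx^{2}+cx+d=(x-p)(x-q)(x-r)$, so the numerator cubic in (\ref{eq:weight}) becomes $-ax^{3}+bx^{2}-cx+d=-(x+p)(x+q)(x+r)=(-x-p)(-x-q)(-x-r)$ and the denominator cubic becomes $(x+1-p)(x+1-q)(x+1-r)$. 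Pairing these with the factor $\frac{(1/2)-x}{(3/2)+x}$, which is independent of the characteristic vector, delivers (\ref{eq:weightA1}) immediately.

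For step (ii), the central observation is the following. For any real $\alpha$, the two even functions
\begin{equation*}
F_{\alpha}(x)=\Gamma(\alpha+x)\Gamma(\alpha-x), \qquad G_{\alpha}(x)=\bigl[\Gamma(1-\alpha+x)\Gamma(1-\alpha-x)\bigr]^{-1}
\end{equation*}
both satisfy, via $\Gamma(z+1)=z\Gamma(z)$, the same forward-shift identity
\begin{equation*}
\frac{F_{\alpha}(x+1)}{F_{\alpha}(x)}=\frac{G_{\alpha}(x+1)}{G_{\alpha}(x)}=-\frac{x+\alpha}{x+1-\alpha}.
\end{equation*}
The right-hand side of (\ref{eq:weightA1}) factors as $\prod_{\alpha\in\mathcal{A}}\bigl(-\tfrac{x+\alpha}{x+1-\alpha}\bigr)$ with $\mathcal{A}=\{-1/2,p,q,r\}$ (the four explicit minus signs cancel), so any product $W(x)=\prod_{\alpha\in\mathcal{A}}H_{\alpha}(x)$ with independent choices $H_{\alpha}\in\{F_{\alpha},G_{\alpha}\}$ is an even solution of (\ref{eq:weightA1}). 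Since the four choices are independent, this produces exactly $2^{4}=16$ even solutions.

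The final step is to match these 16 products to the list (\ref{eq:w1})--(\ref{eq:w8}). I would organise the enumeration by two data: a binary flag recording whether $H_{-1/2}=G_{-1/2}$ (contributing $[\Gamma(3/2\pm x)]^{-1}$) or $H_{-1/2}=F_{-1/2}$ (contributing $\Gamma(-1/2\pm x)$), together with the subset $S\subseteq\{p,q,r\}$ indexing those parameters for which $H_{\alpha}=F_{\alpha}$. The $G_{-1/2}$ branch, sliced by $|S|=0,1,2,3$, matches $W_{1}$, $\{W_{2,1},W_{2,2},W_{2,3}\}$, $\{W_{3,1},W_{3,2},W_{3,3}\}$, $W_{7}$, with sizes $1+3+3+1=8$; the $F_{-1/2}$ branch analogously produces $W_{6}$, $\{W_{4,j}\}$, $\{W_{5,j}\}$, $W_{8}$, a further $8$. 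Together this accounts for all $16$ listed weights.

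The entire mathematical content is contained in the single shift identity for $F_{\alpha}$ and $G_{\alpha}$; everything else is direct substitution plus a $2^{4}$ enumeration. The only real pitfall is bookkeeping—ensuring that within each triple $W_{i,1},W_{i,2},W_{i,3}$ the three permutations of $(p,q,r)$ are matched to the correct $F$-factors—but the complete symmetry of (\ref{eq:weightA1}) in $(p,q,r)$ reduces this to a mechanical check rather than a genuine obstacle.
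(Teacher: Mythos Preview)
Your argument is correct. The paper states this proposition without proof (it is presented as a direct consequence of the preceding discussion and left to the reader to verify), so there is no detailed argument to compare against; your approach---reducing (\ref{eq:weightA1}) to a product of four factors of the common form $-\tfrac{x+\alpha}{x+1-\alpha}$ with $\alpha\in\{-1/2,p,q,r\}$, observing that each factor is matched by either $F_{\alpha}$ or $G_{\alpha}$, and then enumerating the $2^{4}$ independent choices---is exactly the natural verification and organises the $16$ weights cleanly.
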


As the original weight functions corresponding to all above 16 cases are as
\[
\left( \frac{1}{4}-x^{2} \right) W_{k}(x),
\]
the two following identities are remarkable in this direction
\begin{equation*}
(1/4-x^{2})\Gamma(-1/2+x)\Gamma(-1/2-x)=\Gamma(1/2+x)\Gamma(1/2-x),
\end{equation*}
and
\begin{equation*}
\frac{1/4-x^{2}}{\Gamma(3/2+x)\Gamma(3/2-x)}=\frac{1}{\Gamma(1/2+x)\Gamma(1/2-x)}.
\end{equation*}

For example, for the last given case the original weight function becomes
\begin{multline*}
\ww{8}{1}{-(p+q+r)}{pq+pr+qr}{-pqr}{x} =\left( \frac{1}{4}-x^{2} \right) W_{8}(x;p,q,r) \\
=\Gamma(p+x) \Gamma(p-x)\Gamma(q+x) \Gamma(q-x) \Gamma(r+x) \Gamma(r-x) \Gamma\left(\frac{1}{2}+x\right) \Gamma\left(\frac{1}{2}-x\right).
\end{multline*}

By noting section 3, the following table now shows the orthogonality supports of each weight function given in (\ref{eq:w1}) to (\ref{eq:w8}) together with their parameter restrictions in which ${\mathbf{Z}}^{-}=\{0,-1,-2,\dots\}$.

\begin{table}[ht]
\caption{Orthogonality supports for the first hypergeometric sequence} \label{table1} \centering 
\begin{tabular}{lll} 
\hline
$W_{k}(x)$ & Support & Parameter restrictions  \\ \hline & & \\
$W_{1}(x;p,q,r)$ & $[-p,p]$ & $p\in {\mathbf{Z}}^{-}$, $1-q\pm p \not \in {\mathbf{Z}}^{-}$, $1-r\pm p \not \in {\mathbf{Z}}^{-}$. \\
                   & $[-q,q]$ & $q\in {\mathbf{Z}}^{-}$, $1-p\pm q \not \in {\mathbf{Z}}^{-}$, $1-r\pm q \not \in {\mathbf{Z}}^{-}$. \\
                   & $[-r,r]$ & $r\in {\mathbf{Z}}^{-}$, $1-q\pm r \not \in {\mathbf{Z}}^{-}$, $1-p\pm r  \not \in {\mathbf{Z}}^{-}$. \\
& & \\
$W_{2,1}(x;p,q,r)$   & $[-q,q]$ & $q\in {\mathbf{Z}}^{-}$, $p\pm q \not \in {\mathbf{Z}}^{-}$, $1-r\pm q \not \in {\mathbf{Z}}^{-}$.\\
                      & $[-r,r]$ & $r\in {\mathbf{Z}}^{-}$, $p\pm r \not \in {\mathbf{Z}}^{-}$, $1-q\pm r \not \in {\mathbf{Z}}^{-}$.\\
                      & & \\
$W_{3,1}(x;p,q,r)$ & $[-r,r]$ & $r\in {\mathbf{Z}}^{-}$, $p\pm r \not \in {\mathbf{Z}}^{-}$, $q \pm r \not \in {\mathbf{Z}}^{-}$. \\
& & \\
$W_{4,1}(x;p,q,r)$ & $[-q,q]$ & $q\in {\mathbf{Z}}^{-}$, $p\pm q \not \in {\mathbf{Z}}^{-}$, $1-r\pm q \not \in {\mathbf{Z}}^{-}$. \\
& $[-r,r]$ &   $r\in {\mathbf{Z}}^{-}$, $p\pm r \not \in {\mathbf{Z}}^{-}$, $1-q\pm r \not \in {\mathbf{Z}}^{-}$. \\
& & \\
$W_{5,1}(x;p,q,r)$ & $[-r,r]$ &  $r\in {\mathbf{Z}}^{-}$, $p\pm r \not \in {\mathbf{Z}}^{-}$, $q \pm r \not \in {\mathbf{Z}}^{-}$. \\
& & \\
$W_{6}(x;p,q,r)$ & $[-p,p]$ &  $p\in {\mathbf{Z}}^{-}$,  $1-q\pm p \not \in {\mathbf{Z}}^{-}$, $1-r\pm q \not \in {\mathbf{Z}}^{-}$. \\
& $[-q,q]$ & $q\in {\mathbf{Z}}^{-}$, $1-r\pm q \not \in {\mathbf{Z}}^{-}$, $1-p\pm q \not \in {\mathbf{Z}}^{-}$. \\
& $[-r,r]$ &  $r\in {\mathbf{Z}}^{-}$, $1-q\pm r \not \in {\mathbf{Z}}^{-}$, $1-p\pm r \not \in {\mathbf{Z}}^{-}$. \\
& & \\
$W_{7}(x;p,q,r)$ & --- & --- \\
$W_{8}(x;p,q,r)$ & --- & --- \\
&& \\ \hline
\end{tabular}
\end{table}

\begin{remark}\label{remark2}
Since some weight functions are symmetric with respect to the parameters $p$, $q$ and $r$, e.g. $W_{4,2}(x;p,q,r)=W_{4,1}(x;q,p,r)$ and $W_{4,3}(x;p,q,r)=W_{4,1}(x;r,q,p)$, their orthogonality supports and parameter restrictions can be directly derived via the rows of table \ref{table1} by just interchanging the parameters. Also note that $W_{7} (x; p, q, r)$ and $W_{8} (x; p, q, r)$ have no valid orthogonality support. Therefore, there are totally $24$ eligible orthogonality supports for the first sequence.
\end{remark}

\begin{remark}
Let the weight functions corresponding to the first sequence be indicated as $\varrho_{i}(x)=(1/4-x^{2})W_{i}(x)$. Then they satisfy the difference equation
\begin{equation*}
\frac{\varrho_{i}(x+1)}{\varrho_{i}(x)} = \frac{(p+x) (q+x) (r+x)}{(-p+x+1) (-q+x+1) (-r+x+1)},
\end{equation*}
which is equivalent to
\begin{equation*}
\Delta (M_{1}(x) \varrho_{i}(x)) = N_{1}(x) \varrho_{i}(x),
\end{equation*}
where
\begin{equation*}
M_{1}(x)=(x-p) (x-q) (x-r), \quad N_{1}(x)=2 x^2 (p+q+r)+2 p q r.
\end{equation*}
Thus, the hypergeometric polynomials (\ref{eq:34new}) constitute a $\Delta$-semiclassical sequence of class one \cite{1181.42028}, for which we have explicitly given their orthogonality condition, three term recurrence relation, hypergeometric representation and in the sequel in section \ref{section:moments} we will compute the factorial moments with respect to the basis $\vartheta_{n}(x)$ defined in (\ref{eq:basismoments}).
\end{remark}

\subsubsection{A numerical example for the first sequence}

Let us replace $p=-9$, $q=-10$ and $r=-11$ in (\ref{eq:34new}) to get
\begin{multline}\label{pol-example}
\sss{n}{1}{30}{299}{990}{x} \\
= \frac{(-9+\sigma_{n})_{\text{}\left[{n}/{2}\right]} (-10+\sigma_{n})_{\text{}\left[{n}/{2}\right]} (-11+\sigma_{n})_{\text{}\left[{n}/{2}\right]}}
{\left(-31+\text{}\left[{n}/{2}\right]+\sigma_{n}\right)_{\text{}\left[{n}/{2}\right]}} \\
\times x^{\sigma_{n}} \hyper{4}{3}{-[n/2], \,\, -31+[n/2]+\sigma_{n},\,\,\sigma_{n}-x,\,\,\sigma_{n}+x}
{-9+\sigma_{n},\,\,-10+\sigma_{n},\,\,-11+\sigma_{n}}{1}.
\end{multline}

Since $p \in {\mathbf{Z}}^{-}$, $1-q\pm p \not \in {\mathbf{Z}}^{-}$ and $1-r\pm p \not \in {\mathbf{Z}}^{-}$, referring to Table \ref{table1} shows that there are two possible weight functions for the selected parameters which are orthogonal with respect to the sequence $ S_{n}(1,30,299,990;x)$ on the support $\{-9,-8,\dots,8,9\}$, i.e.
\begin{multline*}
\ww{1}{1}{30}{299}{990}{x}=\left(\frac{1}{4}-x^{2} \right) W_{1}(x;-9,-10,-11) \\
=\left( \Gamma(10+x) \Gamma(10-x) \Gamma(11+x) \Gamma(11-x) \right. \\ \left. \Gamma(12+x) \Gamma(12-x) \Gamma(1/2+x) \Gamma(1/2-x) \right)^{-1},
\end{multline*}
and
\begin{multline*}
\ww{6}{1}{30}{299}{990}{x}=\left(\frac{1}{4}-x^{2} \right) W_{6}(x;-9,-10,-11) \\
=\frac{\Gamma(1/2+x) \Gamma(1/2-x)}{ \Gamma(10+x) \Gamma(10-x) \Gamma(11+x) \Gamma(11-x) \Gamma(12+x) \Gamma(12-x)}.
\end{multline*}
Hence there are two orthogonality relations corresponding to the polynomials (\ref{pol-example}), respectively as follows
\begin{multline*}
\sum_{x=-9}^{9}\ww{i}{1}{30}{299}{990}{x} S_{n}(1,30,299,990;x)S_{m}(1,30,299,990;x) \\
=\alpha_{i} \frac{36183421612800000 (-1)^n (-29)_{\left\lfloor
   \frac{n-1}{2}\right\rfloor -1} (-19)_{\left\lfloor
   \frac{n}{2}\right\rfloor -2} (-18)_{\left\lfloor
   \frac{n}{2}\right\rfloor -2} (-17)_{\left\lfloor
   \frac{n}{2}\right\rfloor -2} }{(-31)_n (-30)_n} \\ \times (-9)_{\left\lfloor
   \frac{n-1}{2}\right\rfloor -1} (-8)_{\left\lfloor
   \frac{n-1}{2}\right\rfloor -1} (-7)_{\left\lfloor
   \frac{n-1}{2}\right\rfloor -1} \Gamma \left(\left\lfloor
   \frac{n}{2}\right\rfloor +1\right) \delta_{n,m},
\end{multline*}
where $\left\lfloor x \right\rfloor$ denotes the floor function, and
\[
\alpha_{i}=\sum_{x=-9}^{9}\ww{i}{1}{30}{299}{990}{x}
=\begin{cases} {\beta}/{ \pi }, & i=1, \\ \beta \pi , & i=6, \end{cases}
\]
in which
\[
\beta=\frac{667}{1998530094928466929986605067918114816000000000 }.
\]

\subsection{Second sequence}
If the characteristic vector
\begin{equation*}
(a,b,c,d)=(0,1,-p-q,pq),
\end{equation*}
for $p, q \in {\mathbf{R}}$, is replaced in the difference equation (\ref{eq:16}), then
\begin{multline*}
(2x+1)(x-p) (x-q) \Delta \nabla \phi_{n}(x) -2 x \left(2x^2 +p (2 q-1)-q \right) \Delta \phi_{n}(x) \\
 + \left( -4n \left( \frac{1}{4}- x^2 \right) + \frac{1-(-1)^{n}}{2} (2 p-1) (2 q-1) \right) \phi_{n}(x) =0,
\end{multline*}
has a basis solution, which can be written in terms of hypergeometric series
\begin{multline}\label{eq:fourthfinal}
\sss{n}{0}{1}{-p-q}{pq}{x}  = (p+\sigma_{n})_{\text{}\left[{n}/{2}\right]} (q+\sigma_{n})_{\text{}\left[{n}/{2}\right]}
\\ \times x^{\sigma_{n}} \hyper{3}{2}{-[n/2], \,\,\sigma_{n}-x,\,\,\sigma_{n}+x}{p+\sigma_{n},\,\,q+\sigma_{n}}{1}.
\end{multline}
The above polynomial is a limit case of the polynomial (\ref{eq:34new}) when $r \to \infty$. Moreover, it satisfies a recurrence relation of type (\ref{eq:30}) with
\begin{equation*}
\gamma_{n}\begin{pmatrix} {0} & {1} \\ {-p-q} & {pq}  \end{pmatrix} = \frac{1}{8} \left(-2 n^2-4 n   (p+q-1)+\left((-1)^n-1\right   ) (2 p-1) (2 q-1)\right),
\end{equation*}
which implies
\[
\gamma_{2n}=-n (-1 + n + p + q),
\]
and
\[
\gamma_{2n+1}=-(n + p) (n + q).
\]
Hence, the orthogonality relation corresponding to the second sequence takes the form
\begin{multline*}
\sum_{x=-\theta}^{\theta} \left( \ww{}{0}{1}{-p-q}{pq}{x} \right. \\ \times \left. \sss{n}{0}{1}{-p-q}{pq}{x} \sss{m}{0}{1}{-p-q}{pq}{x}  \right) \\
=\prod_{k=1}^{n} \gamma_{k} \begin{pmatrix} {0} & {1} \\ {-p-q} & {pq} \end{pmatrix} \left(\sum_{x=-\theta}^{\theta} \ww{}{0}{1}{-p-q}{pq}{x} \right) \delta_{n,m},
\end{multline*}
where
\begin{equation*}
\ww{}{0}{1}{-p-q}{pq}{x}=\left( \frac{1}{4}-x^{2} \right) W(x),
\end{equation*}
is the original weight function. Thus, we obtain
\begin{proposition}
The function $W(x)$ satisfies a particular case of the difference equation (\ref{eq:weight}) as
\begin{equation}\label{eq:weightA4}
\frac{W(x+1)}{W(x)}=\frac{1/2-x}{3/2+x} \frac{-x-p}{x+1-p} \frac{-x-q}{x+1-q},
\end{equation}
giving rise to 8 symmetric solutions for equation (\ref{eq:weightA4}) respectively as follows:
\begin{multline}
W_{9}(x;p,q)\\=\frac{1}{\Gamma(1-p+x) \Gamma(1-p-x) \Gamma(1-q+x) \Gamma(1-q-x)\Gamma(3/2+x) \Gamma(3/2-x)},
\end{multline}
\begin{equation}\label{eq:ww291}
W_{10,1}(x;p,q)=\frac{\Gamma(p+x)\Gamma(p-x)}{ \Gamma(1-q+x) \Gamma(1-q-x)\Gamma(3/2+x) \Gamma(3/2-x)},
\end{equation}
\begin{equation}\label{eq:ww292}
W_{10,2}(x;p,q)=\frac{\Gamma(q+x)\Gamma(q-x)}{ \Gamma(1-p+x) \Gamma(1-p-x)\Gamma(3/2+x) \Gamma(3/2-x)},
\end{equation}
\begin{equation}
W_{11}(x;p,q)=\frac{\Gamma(p+x)\Gamma(p-x)\Gamma(q+x)\Gamma(q-x)}{ \Gamma(3/2+x) \Gamma(3/2-x)},
\end{equation}
\begin{equation}
W_{12}(x;p,q)=\frac{\Gamma(-1/2+x) \Gamma(-1/2-x)}{\Gamma(1-p+x) \Gamma(1-p-x) \Gamma(1-q+x) \Gamma(1-q-x)},
\end{equation}
\begin{equation}
W_{13,1}(x;p,q)=\frac{\Gamma(p+x) \Gamma(p-x)\Gamma(-1/2+x) \Gamma(-1/2-x)}{ \Gamma(1-q+x) \Gamma(1-q-x)},
\end{equation}
\begin{equation}
W_{13,2}(x;p,q)=\frac{\Gamma(q+x) \Gamma(q-x)\Gamma(-1/2+x) \Gamma(-1/2-x)}{ \Gamma(1-p+x) \Gamma(1-p-x)},
\end{equation}
\begin{equation}
W_{14}(x;p,q)=\Gamma(p+x) \Gamma(p-x)\Gamma(q+x) \Gamma(q-x)\Gamma(-1/2+x) \Gamma(-1/2-x).
\end{equation}
\end{proposition}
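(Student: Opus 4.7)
The plan splits naturally into two steps. First, I would derive the difference equation (\ref{eq:weightA4}) directly from the general Pearson equation (\ref{eq:weight}) by substituting the characteristic vector $(a,b,c,d) = (0,1,-p-q,pq)$. This makes $ax^3+bx^2+cx+d = x^2-(p+q)x+pq = (x-p)(x-q)$ and the reflected polynomial $-ax^3+bx^2-cx+d = (x+p)(x+q)$. Plugging these into (\ref{eq:weight}) and writing $(x+p)(x+q) = -(-x-p)(-x-q)$ collects a sign that cancels against the shifted factor in the denominator to give exactly (\ref{eq:weightA4}).

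Second, for the eight solutions, my approach is based on a single building block: by the functional equation $\Gamma(z+1)=z\Gamma(z)$, one checks that the symmetric pairing $g_\alpha(x) = 1/[\Gamma(1+\alpha+x)\Gamma(1+\alpha-x)]$ satisfies $g_\alpha(x+1)/g_\alpha(x) = (\alpha-x)/(\alpha+1+x)$. Taking the product of three such blocks with $\alpha \in \{-p, -q, 1/2\}$ verifies that $W_{9}$ satisfies (\ref{eq:weightA4}), and its symmetry under $x\mapsto -x$ is manifest. To produce the other seven symmetric solutions, I would invoke the reflection identity $\Gamma(z)\Gamma(1-z)=\pi/\sin(\pi z)$, which yields the key relation
\begin{equation*}
\Gamma(p+x)\Gamma(p-x)\Gamma(1-p+x)\Gamma(1-p-x) = \frac{\pi^{2}}{\sin(\pi(p+x))\sin(\pi(p-x))}.
\end{equation*}
Since $\sin(\pi(p+x))\sin(\pi(p-x))$ is $1$-periodic and even in $x$, replacing the block $1/[\Gamma(1-p+x)\Gamma(1-p-x)]$ by $\Gamma(p+x)\Gamma(p-x)$ preserves both the difference-equation ratio and the evenness. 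The same substitution applies independently to the $q$-block and (with $1/2 \leftrightarrow -1/2$) to the $3/2$-block, so the three independent binary choices yield $2^{3}=8$ symmetric solutions, which I would then enumerate as $W_{9},W_{10,1},W_{10,2},W_{11},W_{12},W_{13,1},W_{13,2},W_{14}$, matching the listed formulas.

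Finally, for each constructed $W_{i}$ I would record the ratio computation in one line: each $\Gamma(\alpha+x)\Gamma(\alpha-x)$ contributes a factor $(\alpha+x)/(\alpha-x-1)$ to $W_{i}(x+1)/W_{i}(x)$, and each reciprocal contributes the reciprocal factor, so the product telescopes to the right-hand side of (\ref{eq:weightA4}). The main obstacle is purely bookkeeping, namely keeping signs and shifts straight across the eight variants and checking that the $3/2\leftrightarrow -1/2$ swap is consistent with the identity $(1/4-x^{2})\Gamma(-1/2+x)\Gamma(-1/2-x)=\Gamma(1/2+x)\Gamma(1/2-x)$ noted after the proposition; no deeper structural difficulty arises.
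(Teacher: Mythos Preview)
Your proposal is correct and essentially matches what the paper does, which is to state the proposition as a direct consequence without spelling out a proof. Your two-step plan---specializing the Pearson equation (\ref{eq:weight}) via the characteristic vector $(0,1,-p-q,pq)$ and then verifying the eight solutions through the elementary Gamma ratio $g_\alpha(x+1)/g_\alpha(x)=(\alpha-x)/(\alpha+1+x)$ together with the reflection identity---is exactly the intended verification, and the $2^3$ enumeration via independent binary swaps in the $p$-, $q$-, and $1/2$-blocks is the right bookkeeping device. One cosmetic remark: in step~1 no sign actually needs to ``cancel,'' since $(x+p)(x+q)=(-x-p)(-x-q)$ directly; otherwise everything is in order.
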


Table \ref{table2} shows the orthogonality supports of each above-mentioned weights and their parameter restrictions.

\begin{table}[ht]
\caption{Orthogonality supports for the second hypergeometric sequence} \label{table2} \centering 
\begin{tabular}{lll} 
\hline
$W_{k}(x)$ & Support & Parameter restrictions  \\ \hline & & \\
$W_{9}(x;p,q)$ & $[-p,p]$ & $p \in {\mathbf{Z}}^{-}$, $1-q\pm p \not \in {\mathbf{Z}}^{-}$. \\
	 & $[-q,q]$ &  $q \in {\mathbf{Z}}^{-}$, $1-p\pm q \not \in {\mathbf{Z}}^{-}$. \\
& & \\
$W_{10,1}(x;p,q)$ & $[-q,q]$ &  $q \in {\mathbf{Z}}^{-}$, $p\pm q \not \in {\mathbf{Z}}^{-}$. \\ \\
& & \\
$W_{11}(x;p,q)$ & --- & --- \\
& & \\
$W_{12}(x;p,q)$ & $[-p,p]$ & $p \in {\mathbf{Z}}^{-}$, $1-q\pm p \not \in {\mathbf{Z}}^{-}$.  \\
	& $[-q,q]$ &  $q \in {\mathbf{Z}}^{-}$, $1-p\pm q \not \in {\mathbf{Z}}^{-}$. \\
& & \\
$W_{13,1}(x;p,q)$ & $[-q,q]$ & $q \in {\mathbf{Z}}^{-}$, $p\pm q \not \in {\mathbf{Z}}^{-}$. \\
& & \\
$W_{14}(x;p,q)$ & --- & --- \\ && \\ \hline
\end{tabular}
\end{table}

\begin{remark}
As the main orthogonality relation (\ref{eq:245}) shows, an important part that one has to compute in norm square value is $\sum_{x=-\theta}^{\theta} W(x;a,b,c,d)$ where $[-\theta ,\theta]$ is the same orthogonality supports as determined in tables \ref{table1} and \ref{table2}. Since the functions $\{W_{k}(x;a,b,c,d)\}_{k=1}^{20}$ are all even, the aforesaid sums can be simplified on its orthogonality supports by using two identities
\[
\Gamma(p+x)=\Gamma(p)(p)_{x} \quad \text{and} \quad \Gamma(p-x)=\frac{\Gamma(p)(-1)^{x}}{(-p)_{x}},
\]
and this fact that
\[
\sum_{x=-\theta}^{\theta} W(x;a,b,c,d)=2 \sum_{x=0}^{\theta} W(x;a,b,c,d)-W(0;a,b,c,d).
\]
For example, for the first given weight function $W_{1}(x;p,q,r)$ on e.g. $[-p,p]$ we have
\begin{multline*}
\sum_{x=-p}^{p} \left(\frac{1}{4}-x^{2} \right) W_{1}(x;p,q,r) \\=\frac{1}{\pi \Gamma^{2}(1-p) \Gamma^{2}(1-q) \Gamma^{2}(1-r)} \left(2\sum_{x=0}^{p} \frac{(p)_{x}(q)_{x}(r)_{x}}{(1-p)_{x}(1-q)_{x}(1-r)_{x}} -1 \right).
\end{multline*}
\end{remark}

\begin{remark}
Let the weight functions corresponding to the second sequence be indicated as $\varrho_{j}(x)=(1/4-x^{2})W_{j}(x)$. Then they satisfy the difference equation
\begin{equation*}
\frac{\varrho_{j}(x+1)}{\varrho_{j}(x)} = -\frac{(p+x) (q+x)}{(-p+x+1) (-q+x+1)},
\end{equation*}
which is equivalent to
\begin{equation*}
\Delta (M_{2} \varrho_{j}(x)) = N_{2} \varrho_{j}(x),
\end{equation*}
where
\begin{equation*}
M_{2}(x)=(x-p) (x-q), \quad N_{2}(x)=-2 \left(p q+x^2\right).
\end{equation*}
\end{remark}

\subsubsection{A numerical example for the second sequence}

Let us replace $p=14$ and $q=-10$ in (\ref{eq:34new}) to get
\begin{multline}\label{pol-example2}
\sss{n}{0}{1}{-4}{-140}{x}  = (14+\sigma_{n})_{\text{}\left[{n}/{2}\right]} (-10+\sigma_{n})_{\text{}\left[{n}/{2}\right]}
\\ \times x^{\sigma_{n}} \hyper{3}{2}{-[n/2], \,\,\sigma_{n}-x,\,\,\sigma_{n}+x}{14+\sigma_{n},\,\,-10+\sigma_{n}}{1}.
\end{multline}

Since $q \in {\mathbf{Z}}^{-}$ and $p\pm q \not \in {\mathbf{Z}}^{-}$, referring to Table \ref{table2} shows that there are two possible weight functions for the selected parameters which are orthogonal with respect to the sequence $\S_{n}(0,1,-4,-140;x)$ on the support $\{-10,-9,\dots,9,10\}$, i.e.
\begin{multline*}
\ww{10,1}{0}{1}{-4}{-140}{x}=\left(\frac{1}{4}-x^{2} \right) W_{10,1}(x;14,-10) \\
=\frac{(13-x) (12-x) (11-x) (x+11) (x+12) (x+13) \cos (\pi  x)}{\pi },
\end{multline*}
and
\begin{multline*}
\ww{13,1}{0}{1}{-4}{-140}{x}=\left(\frac{1}{4}-x^{2} \right) W_{13,1}(x;14,-10) \\
=\pi (13-x) (12-x) (11-x) (x+11) (x+12) (x+13) \sec (\pi  x).
\end{multline*}

Hence there are two orthogonality relations corresponding to the polynomials (\ref{pol-example2}) respectively as follows
\begin{multline*}
\sum_{x=-10}^{10}\ww{j,1}{0}{1}{-4}{-140}{x} S_{n}(0,1,-4,-140;x)S_{m}(0,1,-4,-140;x) \\
=\tilde{\alpha}_{j} \frac{(-1)^{n+1} (-9)_{\left\lfloor \frac{n-1}{2}\right\rfloor } \Gamma
   \left(\left\lfloor \frac{n-1}{2}\right\rfloor +15\right) \Gamma
   \left(\left\lfloor \frac{n}{2}\right\rfloor +1\right) \Gamma
   \left(\left\lfloor \frac{n}{2}\right\rfloor +4\right)}{3736212480},
\end{multline*}
where
\[
\tilde{\alpha}_{j}=\sum_{x=-10}^{10} \ww{j,1}{0}{1}{-4}{-140}{x}=\begin{cases} {10296/\pi },& j=10, \\ 10296 \pi ,& j=13.\end{cases}
\]

\section{Moments of the two introduced sequences}\label{section:moments}

To compute the moments of a continuous distribution, different bases should be considered. For example, in the normal distribution the canonical basis $\{x^{j}\}_{j \geq 0}$ is used to get
\[
\int_{-\infty}^{\infty} x^{n} \frac{e^{-x^2/2}}{\sqrt{2\pi}} \text{d}x= \begin{cases} 0, & n=2m+1, \\ \displaystyle{\frac{2^{m+1/2}}{\sqrt{2\pi }} \Gamma \left(m+\frac{1}{2}\right)}, & n=2m, \end{cases}
\]
while for the Jacobi weight function $(1-x)^{\alpha} (1+x)^{\beta}$ as the shifted beta distribution on $[-1,1]$, using one of the two bases $\{(1-x)^{j}\}_{j}$ or $\{(1+x)^{j}\}_{j}$ is appropriate for this purpose.

This matter similarly holds for the moments of discrete orthogonal polynomials. For instance, in the negative hypergeometric distribution corresponding to Hahn polynomials, it is more convenient to use the Pochhammer basis $\{(-x)_{n}\}_{n \geq 0}$, instead of the canonical basis, to get
\begin{multline*}
\sum_{x=0}^{N-1} \frac{\Gamma(N) \Gamma(\alpha+\beta+2) \Gamma(\alpha+N-x) \Gamma(\beta+x+1)}{\Gamma(\alpha+1) \Gamma(\beta+1) \Gamma(\alpha+\beta+N+1) \Gamma(N-x) \Gamma(x+1)} (-x)_{n} \\
= (-1)^{n}\frac{(1-N)_{n} (\beta+1)_{n}}{(\alpha+\beta+2)_{n}}.
\end{multline*}

Following this approach, for the weight functions $\varrho_{i}(x)$ appeared in the two introduced hypergeometric sequences, we shall compute the moments of the form
\begin{equation*}
(\varrho_{i})_{n} = \sum_{x=-\theta}^{\theta} \vartheta_{n}(x) \varrho_{i}(x) = \sum_{x=-\theta}^{\theta} \vartheta_{n}(x) \left( \frac{1}{4}-x^{2} \right) W_{i}(x).
\end{equation*}
where the basis $ \vartheta_{n}(x) $ is defined in (\ref{eq:basismoments}).

Since $\vartheta_{2n+1}(x)$ are odd polynomials, clearly all odd moments with respect to this basis are zero. Moreover, from (\ref{eq:34new}) and  (\ref{eq:fourthfinal}) and also using the orthogonality property of the polynomials it can be proved by induction that the even moments corresponding to the first and second sequences respectively satisfy the following recurrence  relations
\[
(\varrho_{i})_{2n}= - \frac{(p+n-1)(q+n-1)(r+n-1)}{p+q+r+n-1} (\varrho_{i})_{2n-2},
\]
and
\[
(\varrho_{j})_{2n} = -(n+p-1) (n+q-1) (\varrho_{j})_{2n-2}.
\]
Therefore, if the aforesaid weight functions are normalized with the first moment equal to one, then we eventually obtain
\begin{equation*}
\sum_{x=-\theta}^{\theta} \left(\frac{1}{4}-x^{2} \right) W_{i}(x) \vartheta_{n}(x)=
\begin{cases}
0, & n=2m+1, \\[3mm]
\displaystyle{\frac{(-1)^m (p)_m (q)_m (r)_m}{(p+q+r)_m}}, & n=2m,
\end{cases}
\end{equation*}
for the weight functions of the first sequence, and
\begin{equation*}
\sum_{x=-\theta}^{\theta} \left(\frac{1}{4}-x^{2} \right) W_{j}(x) \vartheta_{n}(x)=
\begin{cases}
0, & n=2m+1, \\[3mm]
(-1)^m (p)_m (q)_m, & n=2m.
\end{cases}
\end{equation*}
for the weight functions of the second sequence.

\section{A particular example of $S_{n}(a,b,c,d;x)$ generating all classical symmetric orthogonal polynomials of a discrete variable}\label{section5}

In this section, we present a particular interesting example of $S_{n}(a,b,c,d;x)$ that generates all classical symmetric orthogonal polynomials of a discrete variable studied and analyzed in \cite{MR2033351} and is different from the two introduced hypergeometric sequences.

If $a=-2(b+2c+4d)$ in the main difference equation  (\ref{eq:16}), after simplification of a common factor we get
\begin{multline*}
\left( x^2 (b+2 c+4 d)+x (c+2 d)+d \right) \Delta \nabla \phi_{n}(x) -2 x (c+2 d) \Delta \phi_{n}(x) \\+ n (-(b (n-1)+2 (n-2) (c+2 d))) \phi_{n}(x)=0,
\end{multline*}
which is the same equation as analyzed in \cite{MR2033351},
\begin{equation*}
(\hat{a}x^{2}+\hat{b}x+\hat{c}) \Delta \nabla y_{n}(x) - 2 \hat{b} x \Delta y(x) + n(\hat{a}(1-n)+2\hat{b}) y(x)=0,
\end{equation*}
for
\begin{equation*}
d=\hat{c}, \quad c=\hat{b}-2\hat{c}, \quad b=\hat{a}-2\hat{b}, \quad a=-2\hat{a}.
\end{equation*}

Hence, the particular polynomial
\begin{multline}\label{eq:64}
\sss{n}{-2\hat{a}}{\hat{a}-2\hat{b}}{\hat{b}-2\hat{c}}{\hat{c}}{x}
=x^{\sigma_{n}} \sum_{j=0}^{[n/2]} (-1)^{j} \binom{[n/2]}{j} \\
\times\left( \prod_{i=j}^{[n/2]-1} \frac{(2 i+2 \sigma_{n}+1) \left(-\hat{a} (i+\sigma_{n})^2+\hat{b}
   (i+\sigma_{n})-\hat{c}\right)}
{2\left(\hat{b}+\hat{a}(\sigma_{n+1}-i-[n/2]) \right)-\hat{a}} \right)
(\sigma_{n}-x)_{j} (\sigma_{n}+x)_{j},
\end{multline}
generates all cases of classical symmetric orthogonal polynomials of a discrete variable respectively as follows:
\begin{enumerate}
\item[Case 1.] If $\hat{a}=0$, $\hat{b}=1$ and $\hat{c}$ free in (\ref{eq:64}) then $\sigma(x)=(\hat{a}x^{2}+\hat{b}x+\hat{c})=x+\hat{c}$ has one real root and the symmetric Kravchuk polynomials \cite{MR1281365,MR1766991} are therefore derived as
\begin{multline*}
\sss{n}{0}{-2}{1-2\hat{c}}{\hat{c}}{x}=2^{-n}(-2\hat{c})_{n}\hyper{2}{1}{-n,-\hat{c}-x}{-2\hat{c}}{2} \\
=k_{n}^{(1/2)}(x+\hat{c};2\hat{c}),
\end{multline*}
which are orthogonal with respect to the weight function
\begin{equation*}
\varrho(x)=\frac{1}{\Gamma(\hat{c}-x+1)\Gamma(\hat{c}+x+1)} \quad \text{for } x\in\{-\hat{c},-\hat{c}+1,\dots,\hat{c}-1,\hat{c}\},
\end{equation*}
when $2\hat{c} \in {\mathbf{N}}$.

\item[Case 2.] If $\hat{a}=1$, $\hat{b}$ free and $\hat{c}=\hat{b}^{2}/4$ in (\ref{eq:64}), then $\sigma(x)=(\hat{a}x^{2}+\hat{b}x+\hat{c})=(x+\hat{b}/2)^2$ has a double real root and the symmetric Hahn-Eberlein polynomials \cite{MR1149380}
\begin{multline*}
\sss{n}{-2}{1-2\hat{b}}{\hat{b}(2-\hat{b})/2}{\hat{b}^{2}/4}{x}=\frac{((-\hat{b})_{n})^{2}}{(n-2\hat{b}-1)_{n}} \\
\times \hyper{3}{2}{-n,n-2\hat{b}-1,-x-\hat{b}/2}{-\hat{b},-\hat{b}}{1}=\tilde{h}_{n}^{(0,0)}(x+\hat{b}/2,\hat{b}+1),
\end{multline*}
are orthogonal with respect to the weight function
\begin{equation*}
\varrho(x)=\frac{1}{\Gamma^{2}(x+1+\hat{b}/2) \Gamma^{2}(-x+1+\hat{b}/2)},
\end{equation*}
for $x\in\{-\hat{b}/2,-\hat{b}/2+1,\dots,\hat{b}/2-1,\hat{b}/2\}$ when $b \in {\mathbf{N}}$.

\item[Case 3.] If $\hat{a}=1$, $\hat{b}=-\delta_{1}-\delta_{2}$ and $\hat{c}=\delta_{1}\delta_{2}$ in (\ref{eq:64}), then $\sigma(x)$ has two different real roots $\delta_{1}$ and $\delta_{2}$ and depending on the values $\delta_{1}$ and $\delta_{2}$, the symmetric Hahn-Eberlein polynomials  \cite{MR2033351,MR1149380} are derived as
\begin{multline*}
\sss{n}{-2}{1+2(\delta_{1}+\delta_{2})}{-\delta_{2}-\delta_{1}(1+2\delta_{2})}{\delta_{1}\delta_{2}}{x}\\
=\frac{(\delta_{1}+\delta_{2})_{n}\,(2\delta_{2})_{n}}{(n+2(\delta_{1}+\delta_{2})-1)_{n}} \hyper{3}{2}{-n,n+2(\delta_{1}+\delta_{2})-1,\delta_{2}-x}{\delta_{1}+\delta_{2},2 \delta_{2}}{1}\\=\tilde{h}_{n}^{(\delta_{2}-\delta_{1},\delta_{2}-\delta_{1})}(x-\delta_{2};1-2\delta_{2}),
\end{multline*}
when $-2\delta_{2}\in {\mathbf{N}}$ or the symmetric Hahn polynomials \cite{MR2033351,MR1149380} as
\begin{multline*}
\sss{n}{-2}{1+2(\delta_{1}+\delta_{2})}{-\delta_{2}-\delta_{1}(1+2\delta_{2})}{\delta_{1}\delta_{2}}{x}\\ =\frac{(\delta_{1}+\delta_{2})_{n} \, (2\delta_{1})_{n}}{(n+2(\delta_{1}+\delta_{2})-1)_{n}}  \hyper{3}{2}{-n,n+2(\delta_{1}+\delta_{2})-1,\delta_{1}-x}{\delta_{1}+\delta_{2},2\delta_{1}}{1} \\
=h_{n}^{(\delta_{1}+\delta_{2}-1,\delta_{1}+\delta_{1}-1)}(x-\delta_{1};1-2\delta_{1}),
\end{multline*}
when $-2 \delta_{1} \in {\mathbf{N}}$. Finally, for $\delta_{1}+\delta_{2}=1$ we get to Gram polynomials \cite{MR895822}.

\end{enumerate}

\end{document}